\documentclass[12pt]{amsproc}
\usepackage{amsmath, amsthm,amssymb}

\usepackage[pagebackref, colorlinks, linkcolor=red, citecolor=blue, urlcolor=blue, hypertexnames=true]{hyperref}
\usepackage{setspace}

\setlength{\textwidth}{15cm} \setlength{\textheight}{21cm}
\setlength{\oddsidemargin}{0.5cm} \setlength{\topmargin}{0cm}
\setlength{\evensidemargin}{0.5cm} \setlength{\topmargin}{0cm}
\setlength{\parindent}{0pt}

\allowdisplaybreaks

\title{Character rigidity for special linear groups}
\author{Jesse Peterson}
\address{J.P., Department of Mathematics, Vanderbilt University, 1326 Stevenson Center, Nashville, TN 37240, U.S.A.}
\email{jesse.d.peterson@vanderbilt.edu}

\author{Andreas Thom}
\address{A.T., Mathematisches Institut, Universit\"at Leipzig,
PF 100920, 04009 Leipzig, Germany}
\email{andreas.thom@math.uni-leipzig.de}

\date{\today}
%\thanks{}
%\subjclass{}
%\renewcommand{\subjclassname}{\textup{2000} Mathematics Subject Classification}
%\keywords{}
%\dedicatory{}

%\makeindex

%
\newtheorem{thm}{Theorem}[section]
\newtheorem{prop}[thm]{Proposition}
\newtheorem{cor}[thm]{Corollary}
\newtheorem{lem}[thm]{Lemma}
\theoremstyle{definition}
\newtheorem{defn}[thm]{Definition}
\newtheorem{rem}[thm]{Remark}

\newcommand{\C}{{\mathbb C}}

\newcommand{\N}{{\mathbb N}}
\newcommand{\Z}{{\mathbb Z}}
\newcommand{\R}{{\mathbb R}}

\newcommand{\HH}{{\mathcal H}}

\newcommand{\actson}{{\curvearrowright}}

\begin{document}

\onehalfspace

\maketitle
\begin{abstract} In this paper we study characters on special linear groups $SL_n(R)$, where $R$ is either an infinite field or the localization of an order in a number field.
We give several applications to the theory of measure-preserving actions, operator-algebraic superrigidity, and almost homomorphisms.
\end{abstract}
\tableofcontents

\section{Introduction}

The study of class functions -- conjugate invariant functions -- on finite groups has a long and successful history, dating back to the work of Frobenius, Schur and others on character theory and the theory of finite-dimensional representations of finite groups. Every such representation preserves an inner product, i.e.\ is unitary with respect to a suitable Hilbert space structure.
A corresponding theory of unitary representations of infinite groups on infinite dimensional Hilbert spaces -- initiated by work of Weyl and von Neumann -- was based from very beginning more on the concept of positive definite functions instead of class functions. For finite groups, every class function is a difference of positive definite class functions, so that the relevance of positivity could only emerge through the extension to infinite groups.

A theory of positive definite class functions works well for compact groups and led for example to the Peter-Weyl theorem and extensions of many results from the character theory of finite groups. However, it seemed that class functions were less well suited for the study of non-compact Lie groups.
In 1950, the disinterest in positive definite class functions on non-compact Lie groups was substantiated by Segal and von Neumann \cite{segneu}, who showed that a connected semi-simple Lie group (none of whose simple constituents is compact) does not admit non-trivial measurable positive definite class functions. In order to treat non-compact simple Lie groups $G$ one has to introduce conjugation invariant distributions on $G$ which makes the theory significantly more complicated, see the extensive work of Harish-Chandra which was started in \cite{hc}.

In 1964, Thoma \cite{thoma1,thoma2} initiated the systematic study of positive definite class functions on general discrete infinite groups. 
Let us start with some general definitions.
\begin{defn} Let $\Gamma$ be a discrete group. A function $\varphi \colon \Gamma \to \mathbb C$ is called
\begin{itemize}
\item positive definite, if for all $\lambda_1,\dots,\lambda_n \in \mathbb C$ and all $g_1,\dots,g_n \in \Gamma$, we have
$$\sum_{i,j=1}^n \lambda_i \bar \lambda_j \varphi(g_j^{-1} g_i) \geq 0,$$
\item conjugation invariant or a class function, if we have $\varphi(hgh^{-1}) = \varphi(g)$ for all $g,h \in \Gamma$, and
\item a character, if it is positive definite and conjugation invariant.
\end{itemize}
\end{defn}

From now on let $\Gamma$ be a discrete group. Typically, we normalize characters so that $\varphi(e)=1$. Note that the set of normalized characters is a compact convex set in the unit ball of $\ell^{\infty}(\Gamma)$ with the weak* topology. This set is a Choquet simplex \cite{thoma1,thoma2} and thus, every character  arises as an integral of extremal characters in a {\it unique} way.
Supply of characters comes mainly from two classical sources. First of all, every finite-dimensional unitary representations $\pi \colon \Gamma \to U(\HH)$ yields a (normalized) character in the standard way
$$\chi_{\pi}(g) := \frac{{\rm tr}_{\HH}(\pi(g))}{\dim \HH}.$$
The character $\chi_{\pi}$ is extremal if and only if $\pi$ is iso-typical.
A second source comes from probability measure-preserving actions. Indeed, if $\sigma \colon \Gamma \to {\rm Aut}(X,\mu)$ defines a probability measure-preserving action of $\Gamma$ on a standard probability space $(X,\mu)$, then 
$$\varphi_{\sigma}(g) := \mu(\{ x \in X \mid \sigma(g)x=x \})$$
is a character on $\Gamma$. This character is extremal if the action is ergodic and the stabilizer subgroups are almost everywhere self-normalizing \cite{vershiknonfree}, however there are additional extremal characters of this type as well.
If $\sigma \colon \Gamma \to {\rm Aut}(X,\mu)$ is an essentially free action, then 
$$\varphi_{\sigma}(g) = \begin{cases} 1 & g=e \\ 0 & g \neq e \end{cases}.$$
This character is called the {\it regular character}, as its GNS-representation yields the left-regular representation of $\Gamma$. The regular character is extremal if and only if $\Gamma$ is a i.c.c.\ group, i.e.\ all non-trivial conjugacy classes are infinite. In particular, even though the action is ergodic, the associated character need not be extremal.

We say that a character $\varphi$ is induced from a normal subgroup $\Lambda \subset \Gamma$, if 
$\{ g \in \Gamma \mid \varphi(g) \neq 0\} \subset \Lambda.$
If $\Gamma$ has a non-trivial center $C \subset \Gamma,$ then any character on $C$ gives rise to a character on $\Gamma$ induced from $C$. The character which is constant equal to one is called the {\it constant character}.

Recently, stabilizers of non-free probability measure-preserving actions, and their associated characters, have received a lot of attention, much in part to the notion of {\it invariant random subgroups} introduced in \cite{abertIRS}. Many rigidity results in ergodic theory, such as for example in the seminal work of Stuck-Zimmer \cite{stuckzimmer}, and Nevo-Zimmer \cite{nevozimmer}, can be phrased as a rigidity result for invariant random subgroups.
We want to emphasize that some of these results are particular consequences of more general rigidity results for characters. Typically, a rigidity result for characters states, that the only extremal characters on $\Gamma$ which are not induced from the center of $\Gamma$ are the ones that arise from finite-dimensional unitary representations (maybe even factorizing through a finite quotient of $\Gamma$).

Early progress in the classification of characters on particular discrete groups has been obtained by Thoma for $S_{\infty}$ \cite{thoma2} and later by Kirillov \cite{kir} in 1966 who described the characters on the groups $GL_n(k)$ for $n \geq 2$ and $SL_n(k)$ for $n \geq 3$, where $k$ is an arbitrary infinite field. There is some confusion whether Kirillov also covered the case $SL_2(k)$, but his methods indicate that this is not the case. His study was continued and extended by Ovcinnikov \cite{ovc} in 1971 who admits more clearly that he can cover all Chevalley groups not of type $A_1$ and $B_2$. Here, $A_1$ corresponds to $SL_2$. Ovcinnikov says in a footnote that he can also prove his main result for the remaining cases. However, an argument was never published.

Later, Skudlarek studied characters on $GL_{\infty}(k)$ \cite{skud}, where $k$ is a finite field. Vershik and Kerov extended the study of characters on the infinite symmetric group \cite{vershik} and Voiculescu studied the analogous question for the topological group $U(\ell^2 \mathbb N)$ in \cite{voi}. More recently, Bekka \cite{bekka} obtained a complete classification of characters on $SL_n(\mathbb Z)$ for $n \geq 3$. His techniques can be easily adapted to give a new proof of Kirillov's result for $SL_n(k)$ for $n \geq 3$. Bekka observed that his results imply some form of operator-algebraic superrigidity, saying that there is a unique II$_1$-factor representation for $PSL_n(\Z)$ with $n \geq 3$, or equivalently, for every II$_1$-factor $N$ with $PSL_n(\Z) \subset U(N)$ and $PSL_n(\Z)''=N$, we must have that the inclusion $PSL_n(\Z) \subset U(N)$ extends to an isomorphism $L(PSL_n(\Z)) = N$. Here, $L(PSL_n(\Z))$ denotes the group von Neumann algebra of $PSL_n(\Z)$, see Section \ref{characterrigidity} for a definition. This complements rigidity results of Mostow \cite{mostow}, Margulis \cite{margulis}, and Prasad \cite{prasad}, and suggests that one should look for other irreducible lattices in higher rank groups for which operator-algebraic superrigidity holds. In a recent preprint, Dudko and Medynets \cite{dudkomedynets} have studied character rigidity for the Higman-Thompson group and related groups.

\vspace{0.2cm}

The purpose of this note is twofold. First of all, we now want to come back to the case $SL_2(k)$, which was left open by Kirillov and Ovcinnikov. One of our main results is a complete classification of the characters. 

Moreover, we prove a character rigidity result for groups $SL_2(BS^{-1})$, where $B$ is an order in a number field $K$, $S \subset B$ is a multiplicative subset, $BS^{-1}$ denotes the localization of $B$ by $S$ and we are assuming that $BS^{-1}$ has infinitely many units. It is well-known that in many situations these groups enjoy rigidity properties much like lattices in simple algebraic groups of higher rank. The groups themselves are irreducible lattices in products of almost simple algebraic groups of rank one.

The second aim of this note is derive some applications of the above character rigidity results to rigidity phenomenon for invariant random subgroups, and almost representations.  

We show that character rigidity implies a non-existence result for invariant random subgroups. This was also shown independently by Dudko and Medynets in \cite{dudkomedynets}. As a particular consequence, we can reprove and extend unpublished results by Ab\'ert, Avni and Wilson \cite{abert} about the non-existence of invariant random subgroups of $SL_n(k)$ for $n \geq 2$. More concretely, we obtain that for any infinite field, any non-trivial measure-preserving ergodic action of $PSL_2(k)$ on a probability measure space is essentially free.  We derive similar results for groups $PSL_n(k)$ and $PSL_n(BS^{-1})$ for $n \geq 3$, and for $PSL_2(BS^{-1})$ if $BS^{-1}$ contains infinitely many units.

We also derive operator-algebraic superrigidity results and relate our work to the study of almost homomorphisms. In particular, we apply our results and Ozawa's work on Property $\rm T_Q$ \cite{ozawa} to show that there are no non-trivial almost homomorphism from $PSL_n(k)$ to the unitary group of any {\rm II}$_1$-factor with the Haagerup property. Here, $n \geq 3$ and $k$ is a field which is not an algebraic extension of a finite field.

\vspace{0.2cm}

This article is organized as follows. Section \ref{characterrigidity} contains our main results on character rigidity for special linear groups over infinite fields or certain subrings of number fields. Secton \ref{oas} clarifies the relationship of character rigidity with rigidity of invariant random subgroups or equivalently fields of stabilizers of probability measure-preserving actions. It also contains some application towards a von Neumann algebraic superrigidity. In Section \ref{almostrep} we study almost representation into finite von Neumann algebras with the Haagerup property and prove a rigidity result for $PSL_n(k)$, where $n \geq 3$ and $k$ is an infinite field.

\section{Character rigidity}
\label{characterrigidity}
In this section, we want to study character rigidity of the group $SL_2(R)$, where $R$ is a commutative ring. 
\subsection{Generalities} The main tool to study characters on discrete groups is the GNS-construction, named after Gel'fand, Na\u{\i}mark and Segal. Let us recall some of its basic features. Let $\Gamma$ be a discrete group and let $\varphi \colon \Gamma \to \C$ be a character on $\Gamma$. We denote by $\C[\Gamma]$ the complex group ring of $\Gamma$, which comes equipped with a natural involution $(\sum_{g} a_g g)^*= \sum_g \bar a_{g} g^{-1}$. By abuse of notation, we denote the linear extension of $\varphi$ to $\C[\Gamma]$ also by $\varphi$. Since $\varphi$ is positive-definite, the formula $\langle a,b \rangle := \varphi(b^*a)$ defines a positive semi-definite sesqui-linear form on $\C[\Gamma]$. Via separation and completion we obtain a Hilbert space $\HH_{\varphi}$, equipped with a natural map $\Gamma \ni g \mapsto \xi_g \in \HH_{\varphi}$. We also write $\Omega:=\xi_e$. It is a standard fact that the left-action of $\Gamma$ on $\C[\Gamma]$ extends to a unitary representation $\pi \colon \Gamma \to U(\HH_{\varphi})$ and $\varphi(g)= \langle \pi(g) \Omega,\Omega \rangle$, for all $g \in \Gamma$. A natural invariant of $\varphi$ is $L(\Gamma,\varphi):= \pi(\Gamma)''$, the von Neumann algebra generated by $\pi(\Gamma)$. Here, $S' := \{x \in B(\HH_{\varphi}) \mid xy=yx, \forall y \in S \}$ as usual.

Since $\varphi$ is a character, $\varphi(x) := \langle x \Omega,\Omega \rangle$ defines a unital, faithful, and normal trace on $L(\Gamma,\varphi)$. Hence, $L(\Gamma,\varphi)$ is a finite von Neumann algebra. Now, the central decomposition of $L(\Gamma,\varphi)$ into a direct integral of factors is just a mirror image of the decomposition of $\varphi \colon \Gamma \to \C$ into extremal characters. In particular, $\varphi$ is an extremal character if and only if $L(\Gamma,\varphi)$ is a factor -- which is necessarily either of type I$_n$ for $n \in \N$ or of type II$_1$. For more details on all this and definitions of all terms please consult \cite{dixmier}.

If $\varphi$ is the regular character, i.e.\ $\varphi(g)=0$ unless $g=e$, we set $L\Gamma:=L(\Gamma,\varphi)$. The von Neumann algebra $L\Gamma$ was first defined by Murray and von Neumann in their early work on algebras of operators and is now called {\it the group von Neumann algebra} of $\Gamma$. Note that $\HH_{\varphi} = \ell^2 \Gamma$ in this case. The unitary representation associated with the regular character is denoted $\lambda \colon \Gamma \to U(\ell^2 \Gamma)$ and called the left regular representation. It is well-known that $L\Gamma$ is a factor if and only if all non-trivial conjugacy classes of $\Gamma$ are infinite. It is clear that the trivial character $\varphi \equiv 1$ yields the trivial representation of $\Gamma$.

One technical feature of von Neumann algebras with a faithful trace which we are going to use is the existence of {\it conditional expectations}, which one can think of as non-commutative analogues (in fact generalizations) of conditional expectations of essentially bounded complex-valued random variables on a probability space with respect to a sub-$\sigma$-algebra. Let $(N,\tau)$ be a von Neumann algebra and $\tau \colon N \to \C$ a unital, faithful, normal and positive trace. For every von Neumann subalgebra $L \subset N$ there exists a unique conditional expectation $E \colon N \to L$ which is unital, completely positive, faithful, trace-preserving and satisfies $\tau(xy)=\tau(xE(y))$ for all $x \in L$ and $y \in N$, see \cite{umegaki} for details. Faithfulness means in this context that $E(x)=0$ for positive $x \in N$ only if $x=0$. It follows from these properties that $E$ is a $L$-bimodule map, i.e.\ $E(xyz)=xE(y)z$ for all $x,z \in L$ and $y \in N$.

In this context a map $\Phi \colon N \to L$ is called positive if it maps positive operators to positive operators and is called completely positive, if all the linear extensions ${\rm id}_n \otimes \Phi \colon M_n(\C) \otimes N \to M_n(\C) \otimes L$ to matrices are positive. Any positive map satisfies Kadison's inequality $\Phi(x^*)\Phi(x) \leq \Phi(x^*x)$, see \cite{kadison}. Completely positive maps and Kadison's inequality will only play a role in Section \ref{almostrep}, when we study the Haagerup property for von Neumann algebras.

Let $\varphi \colon \Gamma \to \C$ be a character and $\pi \colon \Gamma \to U(\HH_{\varphi})$ the associated unitary representation generating the von Neumann algebra $L(\Gamma,\varphi)$. Let $\Lambda \subset \Gamma$ be a subgroup. Then, there is a natural identification between the group von Neumann algebra of $\Lambda$ with respect to the restriction of $\varphi$ to $\Lambda$ and the von Neumann sub-algebra of $L(\Gamma,\varphi)$ generated by $\Lambda$, i.e.
$\pi(\Lambda)'' = L(\Lambda,\varphi|_{\Lambda}).$ Moreover, the Hilbert space $\HH_{\varphi|_{\Lambda}}$ can be identified in a natural way with the closed subspace of $\HH_{\varphi}$ spanned by $\{\pi(g)\Omega \mid g \in \Lambda \}$, and under this identification the trace-preserving conditional expectation $E: L(\Gamma, \varphi) \to L(\Lambda, \varphi|_{\Lambda})$ is given as $E(x) = e_{\Lambda} x e_{\Lambda}$ where $e_{\Lambda}$ is the orthogonal projection from $\HH_{\varphi}$ onto $\HH_{\varphi|_{\Lambda}}$. For more details consult \cite{dixmier, takesaki}.

The classification of von Neumann algebra factors into types yields a first classification of extremal characters. We call an extremal character $\tau \colon \Gamma \to \C$ compact if $L(\Gamma,\tau)$ is type I$_n$ and weakly mixing if $L(G,\tau)$ is type II$_1$. Note that it is very natural to transfer more attributes from II$_1$-factors to extremal characters, such as Property (T) of Connes and Jones \cite{connesjones}, amenability, the Haagerup property \cite{choda}, primeness, etc.

It is well-known that the set of characters is closed under complex conjugation and pointwise multiplication. There is also an exteriour product, which assigns to characters $\tau_1 \colon \Gamma \to \C$ and $\tau_2 \colon \Lambda \to \C$ a character $\tau_1 \otimes \tau_2 \colon \Gamma \times \Lambda \to \C$ by the formula $(\tau_1 \otimes \tau_2)(g,h)= \tau_1(g)\tau_2(h)$.
Moreover, every extremal character on $\Gamma \times \Lambda$ is of this form.
\begin{lem} \label{productchar}
Let $\Gamma,\Lambda$ be discrete groups and let $\tau \colon \Gamma \times \Lambda \to \C$ be an extremal character. Then, 
$$\tau(gh) = \tau(g) \tau(h), \quad \forall g \in \Gamma, h \in \Lambda.$$
\end{lem}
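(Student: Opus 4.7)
The plan is to exploit the GNS construction for $\tau$ and the fact that $\tau$ being extremal forces the generated von Neumann algebra to be a factor, so that any element in the center is a scalar.

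First, I would apply the GNS machinery from the preceding discussion: let $\pi \colon \Gamma \times \Lambda \to U(\HH_\tau)$ be the associated unitary representation, let $N := L(\Gamma \times \Lambda, \tau) = \pi(\Gamma \times \Lambda)''$, which is a factor by extremality of $\tau$, and let $\Omega \in \HH_\tau$ be the canonical trace vector. Identify $\Gamma$ and $\Lambda$ with $\Gamma \times \{e\}$ and $\{e\} \times \Lambda$, and set $M_1 := \pi(\Gamma)''$ and $M_2 := \pi(\Lambda)''$. Since $\Gamma$ and $\Lambda$ commute in $\Gamma \times \Lambda$, we have $[M_1, M_2] = 0$, and together $M_1$ and $M_2$ generate $N$.

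Next, let $E_1 \colon N \to M_1$ be the trace-preserving conditional expectation, which exists by the discussion of conditional expectations above. For any $h \in \Lambda$, I would argue that $a_h := E_1(\pi(h))$ lies in the center of $N$: for every $g \in \Gamma$ the $M_1$-bimodularity of $E_1$ gives
\begin{equation*}
\pi(g)\, a_h = E_1(\pi(g)\pi(h)) = E_1(\pi(h)\pi(g)) = a_h\, \pi(g),
\end{equation*}
so $a_h \in M_1' \cap M_1 = Z(M_1)$; on the other hand $a_h \in M_1$ automatically commutes with $M_2$ since $M_1$ does. Hence $a_h$ commutes with $M_1 \vee M_2 = N$, i.e.\ $a_h \in Z(N)$. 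Because $\tau$ is extremal, $N$ is a factor and $Z(N) = \C \cdot 1$, so $a_h = \lambda_h \cdot 1$ for some scalar $\lambda_h$. Evaluating $\tau$ and using that $E_1$ preserves the trace gives $\lambda_h = \tau(E_1(\pi(h))) = \tau(\pi(h)) = \tau(h)$.

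Finally, for $g \in \Gamma$ and $h \in \Lambda$ I would compute, again using trace-preservation of $E_1$ and its $M_1$-bimodularity,
\begin{equation*}
\tau(gh) = \tau(\pi(g)\pi(h)) = \tau(E_1(\pi(g)\pi(h))) = \tau(\pi(g)\, a_h) = \tau(h)\, \tau(\pi(g)) = \tau(g)\tau(h),
\end{equation*}
which is exactly the claim. The only step that is not routine bookkeeping is the observation that $E_1(\pi(h))$ lands in $Z(N)$; this is the crucial place where the commutation of $M_1$ with $M_2$ and the factoriality of $N$ (equivalently, extremality of $\tau$) are combined, and it is really the heart of the argument.
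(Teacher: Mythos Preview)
Your argument is correct. The paper does not actually prove this lemma but simply refers to \cite{thoma3} and \cite{popa83}; your proof via the trace-preserving conditional expectation onto $M_1$ landing in the center of the factor $N$ is essentially the argument of Popa in the second of these references.
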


For a proof see \cite[Satz 7]{thoma3} or \cite[Lemma 2.1]{popa83}.
%\begin{proof}
%We denote by $L(\Gamma \times \Lambda, \tau)$ the von Neumann algebra which is generated in the GNS-construction with respect to $\tau$. Let $L(\Gamma,\tau|_{\Gamma})$ be the sub-algebra which is generated by the image of $\Gamma$ and denote by $E\colon L(\Gamma \times \Lambda, \tau) \to L(\Gamma,\tau|_{\Gamma})$
%the associated trace-preserving conditional expectation. We have $\tau(xy)=\tau(xE(y))$ for all $x \in L(\Gamma,\tau|_{\Gamma})$ and $y \in L(\Gamma \times \Lambda, \tau)$. Now, $\tau(gh) = \tau(gE(h))$ for all $g \in \Gamma$ and $h \in \Lambda$. Since $\tau(E(h))=\tau(h)$, it is enough that $E(h) \in \C \subset L(\Gamma,\tau|_{\Gamma})$. 
%
%If $\tau$ is extremal, then $\tau|_{\Gamma}$ is also extremal. Indeed, since the center of $L(\Gamma,\tau|_{\Gamma})$ is contained in the center of $L(\Gamma \times \Lambda, \tau)$, it must be trivial by extremality of $\tau$. Hence, $\tau|_{\Gamma}$ is extremal too. Thus, it is sufficient to show that $qE(h)q^{-1} = E(h)$ for all $q \in \Gamma$. By non-degeneracy of $\tau|_{\Gamma}$, it is enough to show that
%$$\tau(p(qE(h)q^{-1}- E(h))) = 0, \quad \forall p,q \in \Gamma.$$
%Now, we compute
%$$
%\tau(pqE(h)q^{-1}) = \tau(q^{-1}pqE(h)) =\tau(q^{-1}pqh) =\tau(pqhq^{-1}) = \tau(ph) = \tau(pE(h)).$$
%This finishes the proof.
%\end{proof}
In the sequel, we will show that certain irreducible lattices in $SL_2(\R) \times SL_2(\mathbb Q_p)$, such as $SL_2(\Z[\frac1p])$, are character rigid. The lemma above shows that even though this fails drastically (as expected) for reducible lattices, there is still a certain control in the case of products.

\subsection{The main proposition}

We denote the group of units in $R$ by $R^{\times}$. 
In order to study the characters on $SL_2(R)$ we have to define various subgroups. We set

$$
U = \left\{ \left(
\begin{array}{cc}
1 & \beta \\
0 & 1
\end{array}
\right) \mid \beta \in {R} \right\} \quad\mbox{and}\quad
D = \left\{ \left(
\begin{array}{cc}
\beta & 0 \\
0 & \beta^{-1}
\end{array}
\right) \mid \beta \in {R}^\times \right\}.
$$
The center of $SL_2(R)$ is denoted by
$$
C = \left\{ \left(
\begin{array}{cc}
\beta & 0 \\
0 & \beta^{-1}
\end{array}
\right) \mid \beta = \beta^{-1} \in {R}^\times \right\}.
$$
If $\alpha \in R^\times$ we set 
$$
U_\alpha = \left\{ \left(
\begin{array}{cc}
1 & \beta \\
0 & 1
\end{array}
\right) \mid \beta \in {(\alpha^2 - 1)R} \right\} \quad\mbox{and}\quad
D_\alpha = \left\{ \left(
\begin{array}{cc}
\alpha^{k} & 0 \\
0 & \alpha^{-k}
\end{array}
\right) \mid k \in \mathbb Z \right\}.
$$

Note that $U$ is isomorphic to $(R,+)$ and $D$ is isomorphic to the multiplicative group $(R^{\times},\cdot)$. The semi-direct product $U \rtimes D_\alpha$ -- where the matrix $ \left(
\begin{smallmatrix}
\alpha & 0 \\
0 & \alpha^{-1}
\end{smallmatrix}
\right)$ acts by multiplication with $\alpha^2$ -- 
is naturally embedded as upper triangular matrices in the group $SL_2(R)$. It is clear that $GL_2(R)$ acts by conjugation on $SL_2(R)$. If $G \subset SL_2(R)$ is a group, then we set $G^t := \{g^t \mid g \in G\}$, where $g^t$ denotes the transpose of $g$.

For any unitary representation $\pi \colon \Gamma \to U(\mathcal H)$ we say that a vector $\xi \in \mathcal H$ is {\it tracial} if the positive definite functional $\varphi(g) := \langle \pi(g) \xi,\xi \rangle$ is a character on $\Gamma$. We say that a unitary representation is a ${\rm II}_1$-factor representation, if $\pi(\Gamma)''$ is a factor of type ${\rm II}_1$.

\begin{prop} \label{propimp} 
Suppose $R$ is an abelian ring, such that $\langle U_\alpha, U_\alpha^t \rangle$ has finite index in $SL_2(R)$ for all $\alpha \in R^\times$ with $\alpha^2 \neq 1$. Let $\tau: SL_2({R}) \to \mathcal \C$ be an extremal character. Then, either: \begin{enumerate}
\item The character $\tau$ factorizes through a finite quotient of $SL_2(R)$, or
\item for each $\varepsilon > 0$ and $\alpha \in R^\times$ of infinite order, there exist $k_0$ such that for all $\beta \in R$
$$ \left| \tau \left( \left(
\begin{smallmatrix}
\alpha^{k} & \beta \\
0 & \alpha^{-k}
\end{smallmatrix}
\right) \right) \right| < \varepsilon, \quad \forall k \in \Z \colon |k| \geq k_0.$$
\end{enumerate}
Moreover, if $R$ is a field then either:\begin{enumerate}
\item The character $\tau$ is the constant character, or
\item restricted to $D$, $\tau$ is vanishing at infinity.
\end{enumerate}
\end{prop}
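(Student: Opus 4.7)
Let $\pi\colon SL_2(R)\to U(\HH_\tau)$ be the GNS representation of $\tau$, let $M:=\pi(SL_2(R))''$ be the associated finite factor with faithful trace still denoted $\tau$, and put $P:=\pi(U)''\subseteq M$, which is abelian. Let $E_U\colon M\to P$ be the trace-preserving conditional expectation and set $V:=\pi(d_\alpha)$. Since $\tau$ is conjugation-invariant, conjugation by $V$ preserves $P$ and the trace; it restricts to a trace-preserving automorphism $\sigma$ of $P$ given by $\sigma(\pi(u(\beta)))=\pi(u(\alpha^2\beta))$. Write $P^\sigma$ for the fixed subalgebra.

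Two anchor observations drive the plan. First, $\sigma$ commutes with $E_U$ and fixes $V^k$, so $E_U(V^k)\in P^\sigma$ for every $k\in\Z$. Second, by the trace property,
$$\tau(d_\alpha^k u(\beta))=\tau(E_U(V^k)\pi(u(\beta)))=\langle E_U(V^k)\Omega,\pi(u(-\beta))\Omega\rangle,$$
and since $\{\pi(u(\beta))\Omega:\beta\in R\}$ has dense linear span in $\overline{P\Omega}$, statement (2) of the proposition is equivalent to the single quantitative assertion $\|E_U(V^k)\|_2\to 0$ as $|k|\to\infty$. Everything therefore reduces to analyzing $P^\sigma$.

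Identifying $(P,\tau|_P)\cong L^\infty(\widehat U,\mu)$ for the Bochner measure $\mu$ of $\tau|_U$ on the Pontryagin dual, the automorphism $\sigma$ becomes the $\mu$-preserving transformation $\chi\mapsto\chi\circ(\alpha^2\cdot)$, and a character $\chi$ is $\sigma$-fixed iff $\chi|_{U_\alpha^{\mathrm{add}}}=1$, where $U_\alpha^{\mathrm{add}}:=(\alpha^2-1)R$. The core claim---and the step I expect to be the hardest---is that extremality of $\tau$, i.e.\ factoriality of $M$, forces $\mu$ to be concentrated either entirely on $\sigma$-fixed characters or entirely off them; a mixed situation would yield a non-trivial $\sigma$-invariant projection in $P$ that, together with $\pi(d_\alpha)$, produces a non-trivial central element of $M$, contradicting factoriality. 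This is where Lemma \ref{productchar}, or at least its spirit applied to the semidirect product $U\rtimes D_\alpha$, has to be invoked carefully.

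In the first case every $\chi\in\mathrm{supp}(\mu)$ is trivial on $U_\alpha^{\mathrm{add}}$, so $\tau|_{U_\alpha}\equiv 1$ and $\pi(U_\alpha)=1$. Conjugation by the Weyl element $w=\left(\begin{smallmatrix}0&1\\-1&0\end{smallmatrix}\right)\in SL_2(R)$ then gives $\pi(U_\alpha^t)=1$, so $\pi$ vanishes on $\langle U_\alpha,U_\alpha^t\rangle$; this has finite index by hypothesis, establishing (1). In the second case $P^\sigma=\C$, so $E_U(V^k)=\tau(d_\alpha^k)\cdot 1$; a Wiener / weak-mixing argument on the $\Z$-system $(\widehat U,\mu,\sigma)$, using that $\alpha$ has infinite order in $R^\times$, then shows $\tau(d_\alpha^k)\to 0$, proving (2) via the second anchor. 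For the moreover statement, if $R$ is a field then $(\alpha^2-1)R=R$ for every $\alpha$ with $\alpha^2\neq 1$, so $U_\alpha=U$ and $\langle U,U^t\rangle=SL_2(R)$; hence the finite-quotient case collapses to $\tau\equiv 1$ (as $PSL_2(R)$ is simple and infinite for infinite $R$), and the fact that every $\gamma\in R^\times$ of infinite order generates a cyclic subgroup along which (2) delivers decay upgrades, after a standard accounting, to vanishing of $\tau|_D$ at infinity.
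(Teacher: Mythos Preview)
Your reduction of statement (2) to $\|E_U(V^k)\|_2\to 0$ via the observation $E_U(V^k)\in P^\sigma$ is correct and clean, but the ``core claim''---that factoriality of $M$ forces $\mu$ to be concentrated either entirely on the $\sigma$-fixed characters or entirely off them---is \emph{false}, and the argument collapses there. Take $R=\Z[1/p]$, $\alpha=p$, and let $\tau$ be either extremal component of the regular character (so $M$ is a factor). Then $\tau|_U=\delta_0$, $\mu$ is Haar on $\widehat U$, and the set of $\sigma$-fixed characters is the annihilator of $(\alpha^2-1)R$, which has Haar measure $1/(p^2-1)$, strictly between $0$ and $1$. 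So $p_\alpha$ is a non-trivial $\sigma$-invariant projection in $P$ even though $M$ is a factor. The point is that $p_\alpha$ commutes with $\pi(U\rtimes D_\alpha)$, not with all of $\pi(SL_2(R))$; it has no reason to commute with $\pi(U^t)$ or the Weyl element, and no averaging will manufacture a non-scalar central element. Lemma~\ref{productchar} does not help: $U\rtimes D_\alpha$ is not a direct product, and $\tau$ restricted to it need not be extremal.

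There is a second, independent gap in your Case~2. Even granting $P^\sigma=\C$ (which is ergodicity of $\sigma$, already strictly stronger than ``$\mu$ off the fixed-point set''), you obtain $E_U(V^k)=\tau(d_\alpha^k)\cdot 1$, and then must show $\tau(d_\alpha^k)\to 0$. But a Wiener/weak-mixing argument on the $\Z$-system $(\widehat U,\mu,\sigma)$ controls $\sigma$-correlations $\int f\,(f\circ\sigma^k)\,d\mu$, not $\tau(d_\alpha^k)$, which lives in the spectral measure of $V$ on $\HH$ and is invisible to the dynamics on $\widehat U$. Even Wiener's theorem applied to that spectral measure yields only Ces\`aro decay, not the pointwise decay the proposition demands.

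The paper's proof avoids both problems by bringing in the transpose side $U^t$ symmetrically. It never claims $p_\alpha\in\{0,1\}$; instead it sets $p=\sup_k p_{\alpha^{k!}}$ and $q=\sup_k q_{\alpha^{k!}}$, uses a normality trick (conjugating by $a\in U$ and averaging) to prove $\tau(\pi(g)p^\perp)=0$ for all $g\in(U\rtimes D_\alpha)\setminus U$ and symmetrically for $q^\perp$, and invokes the finite-index hypothesis to force $p\wedge q=0$. Then $p^\perp\vee q^\perp=1$, and the conditional expectation of $p^\perp+q^\perp$ onto $\pi(\langle d_\alpha\rangle)''$---not onto $P$---has full support; this forces the $\langle d_\alpha\rangle$-representation on $L^2(\pi(\langle d_\alpha\rangle)'',\tau)$ to be the left-regular one, hence mixing, and the uniform decay in $\beta$ comes from pre-compactness of $E(\pi(U))\Omega$. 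Note in particular that the hypothesis on $\langle U_\alpha,U_\alpha^t\rangle$ is used to obtain $p\wedge q=0$, i.e.\ inside what you call Case~2; your outline only uses it in Case~1, and the upper-triangular subgroup $U\rtimes D_\alpha$ alone does not see enough of $M$ to force decay of $\tau$ along $D_\alpha$.
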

\begin{proof}
We suppose that $\tau$ does not factorize through a finite quotient of $SL_2(R)$ and we will show that we then have the alternative conclusion.

We denote by $\pi \colon SL_2(R) \to U(\mathcal H)$ the GNS-construction with respect to $\tau$ and set $N:= \pi(SL_2(R))''$. Denote by $\Omega \in \mathcal H$ the canonical vector such that $\tau(g)= \langle \pi(g) \Omega,\Omega \rangle$ for all $g \in SL_2(R)$. Since $\tau$ is extremal, $N$ is a finite factor and $\tau$ extends to a positive faithful trace on $N$. For $\alpha \in R^\times$ denote by $p_\alpha$ the projection onto the subspace of 
$U_\alpha$-invariant vectors. Since $U_{\alpha}$ is normalized by $U \rtimes D_{\alpha}$, we get $p_{\alpha} \in \pi(U \rtimes D_\alpha)'$. Note also that obviously $p_\alpha \in \pi(U_\alpha)''$. 

In particular, $p_\alpha \mathcal H$ and $p_\alpha^{\perp} \mathcal H$ are $(U \rtimes D_\alpha)$-invariant and we can consider the unitary representations $p_\alpha \pi_{|U \rtimes D_\alpha}$ and  $p_\alpha^{\perp} \pi_{|U \rtimes D_\alpha}$ of $U \rtimes D_\alpha$. Note that $p_{\alpha} \Omega$ and $p_{\alpha}^{\perp} \Omega$ are tracial for $U \rtimes D_{\alpha}$.

Since $p_\alpha^\perp \in \pi(U)'$ we may use Bekka's normality trick (Lemma 16 in \cite{bekka}) to show that for  
$g = \left(
\begin{smallmatrix}
\alpha^{-k} & \beta \\
0 & \alpha^k
\end{smallmatrix}
\right) \in U \rtimes D_{\alpha}$, with $k \not= 0$, and $j, l \in \mathbb N$ with $k|l$, we have \begin{equation} \label{eq1}
\tau(\pi(g) p_{\alpha^j}^\perp) = \tau(\pi(g) p_{\alpha^{l}} p_{\alpha^j}^\perp).
\end{equation} Indeed,
for all $a \in U$, we have
\begin{eqnarray*} 
\tau(\pi(g) p_{\alpha^j}^\perp) &=& \langle \pi(g) p_{\alpha^j}^\perp \Omega, p_{\alpha^j}^\perp \Omega \rangle \\ &=& \langle \pi(aga^{-1}) p_{\alpha^j}^\perp \Omega, p_{\alpha^j}^\perp \Omega \rangle \\
&=& \langle \pi(gg^{-1})\pi(aga^{-1}) p_{\alpha^j}^\perp \Omega, p_{\alpha^j}^\perp \Omega \rangle \\
&=& \langle \pi(g^{-1}aga^{-1}) p_{\alpha^j}^\perp \Omega, \pi(g^{-1}) p_{\alpha^j}^\perp \Omega \rangle.
\end{eqnarray*}
A direct computation shows $\{ g^{-1}aga^{-1} \mid a \in U \} = U_{\alpha^k}$. Note also that $U_{\alpha^l} \subseteq U_{\alpha^k}$ since $1-\alpha^k$ divides $1-\alpha^l$ if $k| l$. This implies that we can find a convex combination of the vectors $\{\pi(g^{-1}aga^{-1}) p_{\alpha^j}^\perp \Omega \mid a \in U \}$ which converges to $p_{\alpha^l }p_{\alpha^j}^\perp \Omega$. Hence, we can conclude $$\tau(\pi(g) p_{\alpha^j}^\perp) = \langle \pi(g) p_{\alpha^l} p_{\alpha^j}^\perp \Omega, p_{\alpha^j}^\perp \Omega \rangle = \tau(\pi(g) p_{\alpha^l} p_{\alpha^j}^\perp).$$

Now, if $\alpha$ has infinite order we consider the sequence of projections $( p_{\alpha^{k!}} )_k$. It is easy to see that $( p_{\alpha^{k!}} )_k$ is increasing as $k \in \mathbb N$ increases. Let us denote the supremum by $p := \sup_k p_{\alpha^{k!}}$.
We conclude from Equation \eqref{eq1}, that $\tau(\pi(g) p_{\alpha^j}^\perp) = \tau(\pi(g) p p_{\alpha^j}^\perp)$ for all $j \in \mathbb N$, $g \in (U \rtimes D_\alpha) \setminus U$. 
Taking the limit as $j$ tends to infinity, we then have $\tau(\pi(g) p^\perp) = \tau( \pi(g) p p^\perp ) = 0$, for all $g \in (U \rtimes D_\alpha) \setminus U$. 

We may also consider the transpose of $U \rtimes D_\alpha$ in $SL_2(R)$ and by symmetry if we denote by $q_\alpha$ the projection onto the space of $U_\alpha^t$-invariant vectors, and $q = \sup_k q_{\alpha^{k!}}$, then we also have $\tau(\pi(g^t) q^\perp)=0$, for all $g \in (U \rtimes D_\alpha) \setminus U$.

If we had $p \wedge q \not= 0$, then for some $k \in \mathbb N$ we would have that also $p_{\alpha^k} \wedge q_{\alpha^k} \not= 0$. Hence, $\pi$ would contain invariant vectors for $\langle U_{\alpha^k}, U_{\alpha^k}^t \rangle$. By hypothesis $\langle U_{\alpha^k}, U_{\alpha^k}^t \rangle$ is a subgroup of finite index and hence by extremality of $\tau$, the representation $\pi$ would then factor through a representation of some finite quotient of $SL_2(R)$, which we are assuming it does not. 

Thus, we have $p \wedge q = 0$, or equivalently $p^\perp \vee q^\perp = 1$. Fix $g \in D_\alpha \setminus \{ e \}$ and denote by $E$ the trace-preserving conditional expectation from $N$ to $\pi(\langle g \rangle)''$. Then for all $h \in \langle g \rangle \setminus \{ e \}$ we have
$\tau(\pi(h) E(p^\perp + q^\perp) ) = 0$. If $f$ denotes the support projection of the operator $E(p^\perp + q^\perp)$ in $\pi(\langle g \rangle)''$, then we have $E(f^\perp( p^\perp + q^\perp) f^\perp) = f^\perp E( p^\perp + q^\perp) f^\perp = 0$, and hence by faithfulness of the conditional expectation we have $f^\perp( p^\perp + q^\perp) f^\perp = 0$. Therefore $f$ cannot be smaller than the support of $p^\perp + q^\perp$ which is $p^\perp \vee q^\perp = 1$. 

Now, since $E(p^\perp + q^\perp)$ has full support it is then a cyclic vector for the representation of $\langle g \rangle$ acting on $L^2(\pi(\langle g \rangle)'', \tau) \subset \HH$. As $\tau(\pi(h) E(p^\perp + q^\perp) ) = 0$ for all $h \in \langle g \rangle \setminus \{e\}$, it then follows that this representation is the left-regular representation, and so in particular is mixing, i.e.\ the matrix coefficients $g^n \mapsto \langle \pi(g^n) \xi, \eta \rangle$ vanish at infinity for each $\xi, \eta \in L^2(\pi(\langle g \rangle)'', \tau)$.
 
We claim that $E(\pi(U)) \Omega$ is pre-compact in $L^2(\pi(\langle g \rangle)'', \tau) \subset \mathcal H$. Indeed, $U/U_{\alpha^{k!}}$ is finite and hence $p_{\alpha^{k!}} \pi(U)$ is a finite set for each $k \in \mathbb N$. In particular, $E(p_{\alpha^{k!}} \pi(U))\Omega$ is pre-compact. Since $\sup_k p_{\alpha^{k!}} = p$, we then have that $E(p \pi(U))\Omega$ is also pre-compact. Moreover, since we have $h_1h_2 \in (U \rtimes D_\alpha) \setminus U$ if $h_1 \in \langle g \rangle \setminus \{ e \}$ and $h_2 \in U$, and since $\tau(\pi(h) p^\perp) = 0$, for all $h \in (U \rtimes D_\alpha) \setminus U$, we then have that the span of $E(p^\perp \pi(U))\Omega$ is orthogonal to the subspace in $L^2(\pi(\langle g \rangle)'', \tau)$ spanned by $\{ \pi(h) \mid h \in \langle g \rangle \setminus \{ e \} \}$, which is of co-dimension at most one. Thus, the span of $E(p^\perp \pi(U))\Omega$ is at most one dimensional and it follows that $E(\pi(U))\Omega \subset L^2(\pi(\langle g \rangle)'', \tau)$ is pre-compact as claimed.

Since the representation of $\langle g \rangle$ on  $L^2(\pi(\langle g \rangle)'', \tau)$ is mixing, and since $E(\pi(U))\Omega$ is pre-compact, it follows that for each $\varepsilon > 0$ there exists $k_0 \in \mathbb N$ such that for $k \geq k_0$ we have $| \tau(\pi(g^k) x) | = | \tau(\pi(g^k) E(x)) | < \varepsilon$ for all $x \in E(\pi(U))\Omega$. In particular, there exists $k_0$ such that for all $k \in \Z$ with $|k| \geq k_0$, we have
$$
\left| \tau \left( \left(
\begin{smallmatrix}
\alpha^{k} & \beta \\
0 & \alpha^{-k}
\end{smallmatrix}
\right) \right) \right| 
= \left| \tau \left( \left(
\begin{smallmatrix}
\alpha^{k} & 0 \\
0 & \alpha^{-k}
\end{smallmatrix}
\right) \left(
\begin{smallmatrix}
1 &  \alpha^{-k}\beta\\
0 & 1
\end{smallmatrix}
\right) 
\right) \right| 
< \varepsilon.
$$

We have thus shown the proposition for the case of general rings. 

Now suppose that $R$ is a field. Then for $\alpha \in R^\times$, $\alpha^2 \not= 1$ we have $U_{\alpha}=U$. In this case we set $p:=p_{\alpha}$ and $q:=q_{\alpha}$ and proceed with the same argument as above. This time though we consider the condition expectation onto $\pi(D)''$ and see that $\tau$ restricted to $D$ is vanishing at infinity. 
\end{proof}

\subsection{The case of infinite fields}
In this section we want to prove our first main result. Before,
we need the following elementary lemma.
\begin{lem}
Let $\HH$ be a Hilbert space $\eta,\xi_1,\dots,\xi_n \in \HH$ be unit vectors such that $|\langle \xi_i,\xi_j \rangle | \leq \varepsilon$ and $\langle \eta,\xi_i \rangle = \langle \eta,\xi_j \rangle$ for all $1 \leq i < j \leq n$. If $\varepsilon \leq \frac1{2n}$, then, $$|\langle \eta, \xi_i \rangle|^2 \leq \frac1{n} + 8n \varepsilon, \quad 1 \leq i \leq n.$$ In particular, if $\varepsilon^{-1/2} \leq n \leq 2\varepsilon^{-1/2}$, then
$$|\langle \eta, \xi_i \rangle| \leq 5 \cdot \varepsilon^{1/4}, \quad \forall 1 \leq i \leq n.$$

\end{lem}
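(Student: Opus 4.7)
The plan is to average the $\xi_i$ and apply Cauchy--Schwarz. Set $c := \langle \eta, \xi_1\rangle$, which by hypothesis equals $\langle \eta, \xi_i\rangle$ for every $i$, and consider the arithmetic mean $\xi := \frac{1}{n}\sum_{i=1}^{n}\xi_i \in \HH$. Since the coefficients $1/n$ are real, linearity of the inner product immediately gives $\langle \eta, \xi\rangle = c$.

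Next I would expand the norm
$$\|\xi\|^2 = \frac{1}{n^2}\sum_{i,j=1}^{n}\langle \xi_i,\xi_j\rangle = \frac{1}{n} + \frac{1}{n^2}\sum_{i\neq j}\langle \xi_i,\xi_j\rangle,$$
and use the hypothesis $|\langle \xi_i,\xi_j\rangle|\leq \varepsilon$ for $i \neq j$ with the triangle inequality to bound
$$\|\xi\|^2 \leq \frac{1}{n} + \frac{n(n-1)}{n^2}\varepsilon \leq \frac{1}{n}+\varepsilon.$$
Cauchy--Schwarz then yields
$$|\langle \eta,\xi_i\rangle|^2 = |c|^2 = |\langle \eta,\xi\rangle|^2 \leq \|\eta\|^2\|\xi\|^2 \leq \frac{1}{n} + \varepsilon,$$
which is in fact stronger than the stated bound $\frac{1}{n}+8n\varepsilon$ (and does not even require the smallness hypothesis $\varepsilon\leq 1/(2n)$; I suspect that hypothesis is carried over so that the ``in particular'' regime is nonempty, or because the author prefers to argue via invertibility of the perturbed Gram matrix $G = I + E$ of $(\xi_1,\ldots,\xi_n)$, where $\varepsilon \leq 1/(2n)$ is what keeps $\|E\|_{\mathrm{op}}$ safely below $1$).

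For the ``in particular'' assertion I would just substitute into what has already been proved: when $\varepsilon^{-1/2} \leq n \leq 2\varepsilon^{-1/2}$ we have $\frac{1}{n} \leq \varepsilon^{1/2}$ and $8n\varepsilon \leq 16\varepsilon^{1/2}$, so the first part of the lemma gives $|\langle \eta,\xi_i\rangle|^2 \leq 17\varepsilon^{1/2} \leq 25\varepsilon^{1/2}$, i.e.\ $|\langle \eta,\xi_i\rangle| \leq 5\varepsilon^{1/4}$. There is no real obstacle here; the only observation to make is that the equal-inner-product hypothesis is precisely what lets $\langle \eta,\xi\rangle$ collapse to the single scalar $c$, so that a one-line Cauchy--Schwarz estimate against the norm of the average suffices.
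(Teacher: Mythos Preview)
Your argument is correct and in fact cleaner than the paper's. The paper does not average the $\xi_i$; instead it projects $\eta$ onto $\operatorname{span}\{\xi_1,\dots,\xi_n\}$, writes $\eta=\sum_i\lambda_i\xi_i$, and analyzes the (near-)Gram system $\delta=\sum_j\lambda_j\langle\xi_j,\xi_i\rangle$ to pin down $|\delta-\lambda_i|\leq n\lambda\varepsilon$ with $\lambda=\max_i|\lambda_i|$. From this it extracts $\lambda\leq 2$ (here the hypothesis $\varepsilon\leq 1/(2n)$ is genuinely used to make $1-n\varepsilon$ bounded below) and then expands $1=\|\eta\|^2=\sum_{i,j}\lambda_i\bar\lambda_j\langle\xi_i,\xi_j\rangle$ to reach $\delta^2\leq 1/n+8n\varepsilon$. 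Your averaging plus Cauchy--Schwarz bypasses the coefficient analysis entirely, yields the sharper bound $|\langle\eta,\xi_i\rangle|^2\leq 1/n+\varepsilon$, and does not need the smallness assumption $\varepsilon\leq 1/(2n)$ at all. Your speculation about that hypothesis is on target: in the paper's Gram-matrix route it is exactly what keeps the perturbation under control, whereas in your argument it is redundant. The ``in particular'' step is handled identically in both approaches, by direct substitution.
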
 \label{elementary}
\begin{proof} Set $\delta:= \langle \eta,\xi_i \rangle$. Without loss of generality $\delta >0$ and we may suppose that $\eta \in {\rm span}\{\xi_i \mid 1\leq i \leq n \}$. Then, $\eta = \sum_{i=1}^n \lambda _i \xi_i$ for some $\lambda_1,\dots,\lambda_n \in \C$. We set $\lambda := \max \{ |\lambda_i| \mid 1 \leq i \leq n \}$. It follows from
$\delta = \langle \eta,\xi_i \rangle = \sum_{j=1}^n \lambda_j \langle \xi_j,\xi_i \rangle$ that
\begin{equation} \label{firstin}
|\delta - \lambda_j | \leq n \lambda \varepsilon, \quad \forall 1 \leq j \leq n.
\end{equation}
In particular,
$\lambda \leq \delta + n \lambda \varepsilon $ and hence $\lambda \leq \delta\cdot (1 - n \varepsilon)^{-1} \leq (1 - n \varepsilon)^{-1} \leq 2$.
If $\delta- n\lambda \varepsilon \leq 0$, we get $\delta \leq 2n\varepsilon$ and we are done. So we may assume that $\delta \geq 2n\varepsilon$; then Equation \eqref{firstin} yields
\begin{equation} \label{ine}
|\lambda_i|^2 \geq (\delta- n\lambda \varepsilon)^2 \geq \delta^2 - 2n \lambda \varepsilon.\end{equation}
Now, we have
$$1 = \|\eta\|^2 = \sum_{i,j=1}^n \lambda_i \bar\lambda_j \langle \xi_i,\xi_j \rangle.$$
and hence
$$1 + 4n^2 \varepsilon \geq 1 + n^2 \lambda^2 \varepsilon \geq \sum_{i=1}^n |\lambda_i|^2 \stackrel{\eqref{ine}}{\geq}  n \delta^2 - 2 n^2 \lambda \varepsilon \geq n \delta^2 - 4n^2 \varepsilon.$$ We conclude
$\delta^2  \leq   \frac{1}{n} + 8n \varepsilon.$ This finishes the proof.\end{proof}

\begin{thm} \label{rigid}
Let $k$ be an infinite field. Then, any non-trivial extremal character on $SL_2(k)$ is induced from $C \subset SL_2(k)$.
\end{thm}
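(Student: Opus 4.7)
The plan is to combine Proposition~\ref{propimp} with the preceding elementary lemma. Proposition~\ref{propimp}, applied in the field case to our non-trivial extremal character $\tau$, delivers that $\tau|_D$ vanishes at infinity. A closer look at the proof of that proposition (using that $U_\alpha=U$ for every $\alpha^2\neq 1$ when $R$ is a field, so the projection $p_\alpha=p$ onto $U$-invariant vectors is independent of $\alpha$, together with pre-compactness of $E(\pi(U))\Omega$ in $L^2(\pi(D)'',\tau)$ and $D$-mixing) yields a uniform variant: for each $\varepsilon>0$ there is a finite set $F_\varepsilon\subset D$ such that $|\tau(du)|<\varepsilon$ for all $u\in U$ whenever $d\in D\setminus F_\varepsilon$, together with the analogous statement for $U^tD$ by the transpose symmetry already used in the proof.

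To show $\tau(g)=0$ for every $g\in SL_2(k)\setminus C$, I would apply the preceding elementary lemma with $\eta=\Omega$ and unit vectors $\xi_i:=\pi(h_igh_i^{-1})\Omega$ for a finite family $h_1,\dots,h_n\in SL_2(k)$ to be chosen. Conjugation invariance forces $\langle\Omega,\xi_i\rangle=\tau(g)$ to be constant in $i$, while the trace property of $\tau$ gives
$$\langle\xi_i,\xi_j\rangle = \tau\bigl([h_i^{-1}h_j,\,g^{-1}]\bigr).$$
The task is to select the $h_i$'s so that each commutator with $i\ne j$ is conjugate in $SL_2(k)$ to an element of $D\setminus F_\varepsilon$; then $|\langle\xi_i,\xi_j\rangle|<\varepsilon$, and the elementary lemma yields $|\tau(g)|\le 5\varepsilon^{1/4}$, which forces $\tau(g)=0$ upon letting $\varepsilon\to 0$.

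For the generic $g=\bigl(\begin{smallmatrix}a&b\\c&e\end{smallmatrix}\bigr)$ with $bc\neq 0$, I would take $h_i=\mathrm{diag}(\mu_i,\mu_i^{-1})\in D$: a direct computation gives that $[h_i^{-1}h_j,g^{-1}]$ has trace $2-bc(\lambda-\lambda^{-1})^2$ for $\lambda=\mu_j\mu_i^{-1}$. As $\lambda$ ranges over $k^\times$ this trace varies, and the condition for the commutator to be split over $k$ defines a rational conic with a visible $k$-rational point; hence by the infinitude of $k$ there are cofinitely many $\lambda\in k^\times$ for which the commutator is conjugate in $SL_2(k)$ to an element of $D\setminus F_\varepsilon$. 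Choosing $n$ diagonals whose pairwise ratios all lie in this cofinite set closes the generic case. For the remaining cases---$g\in D\setminus C$ (where $D$-conjugation fixes $g$), unipotent $g$, or $g$ with anisotropic characteristic polynomial---the plan is to first conjugate $g$ into the big Bruhat cell $BwB$ by a suitable element of $U^t$ (or $U$) so that both off-diagonal entries become nonzero, reducing to the generic case.

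The main obstacle is the splitness condition for the commutators, which is where the infinitude of $k$ is essential: one must guarantee that the conic governing split-ness has enough $k$-rational points uniformly in $\lambda$, and carefully handle the non-generic $g$'s for which $D$-conjugation alone does not produce enough distinct conjugates. The anisotropic case in particular will require using the $U^tD$-version of the uniform vanishing, rather than the $DU$-version.
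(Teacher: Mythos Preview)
Your overall strategy—combine Proposition~\ref{propimp} with the elementary lemma via vectors $\xi_i=\pi(h_igh_i^{-1})\Omega$ with $h_i\in D$—is the paper's framework, and your opening paragraph on the uniform $DU$-vanishing is a correct reading of the proof of Proposition~\ref{propimp} in the field case. The gap is exactly where you locate it, but it is worse than you suggest. For generic $g$ with $bc\neq 0$ and $\lambda=\mu_j\mu_i^{-1}$, the splitness condition on $[d_\lambda,g^{-1}]$ is that $t(\lambda)^2-4$ be a square, where $t(\lambda)=2-bc(\lambda-\lambda^{-1})^2$. After clearing denominators this reads
\[
Z^2=(bc)^2\lambda^4-\bigl(2(bc)^2+4bc\bigr)\lambda^2+(bc)^2,
\]
which is a genus-one curve in $(\lambda,Z)$, not a conic. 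Over a field such as $\mathbb{Q}$ it can have few or no affine points with $\lambda\in k^\times$: for $bc=1$ one needs $\lambda^4-6\lambda^2+1$ to be a rational square, and a direct search finds none. So the claim that the split locus is cofinite in $\lambda$ is unjustified, and with it the ability to choose $n$ pairwise-good diagonals for arbitrary $n$.

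The paper avoids this by first conjugating $g$ to the companion form $\left(\begin{smallmatrix}0&1\\-1&\beta\end{smallmatrix}\right)$, i.e.\ arranging the $(1,1)$-entry to vanish (this can even be done by an $SL_2(k)$-conjugation, since any non-central $g$ has a non-eigenvector $v$, and the basis $\{v,g(v)\}$ does the job after rescaling). Once $a=0$ one has $bc=-1$, and then a one-line computation gives
\[
[d_\lambda,g^{-1}]=\begin{pmatrix}\lambda^2& * \\ 0 & \lambda^{-2}\end{pmatrix},
\]
which is \emph{automatically} upper triangular with explicit eigenvalues $\lambda^{\pm 2}$; no splitness issue arises, and Proposition~\ref{propimp} applies directly. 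In the paper's phrasing one factors $g=w\cdot b$ with $b$ upper triangular, puts $w$ into $\eta=\pi(w^{-1})\Omega$, and takes $\xi_\alpha\in\pi(B)\Omega$; the inner products $\langle\xi_\alpha,\xi_{\alpha'}\rangle$ then lie in $\tau(B)$ because $B$ is a group. Your setup with $\eta=\Omega$ becomes literally this one once $a=0$ (the two families of $\xi$'s differ by the fixed unitary $\pi(w)$). So the missing idea is not any new analytic input but simply the preliminary normalization to companion form, which simultaneously eliminates your generic/diagonal/unipotent/anisotropic case split.
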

\begin{proof}
Let $\tau \colon SL_2(k) \to \C$ be a non-trivial extremal character.
First of all, if $k$ is infinite, then $SL_2(k)$ does not have any finite quotients. Any non-central element $g \in SL_2(k)$ is conjugated by some $h \in GL_2(k)$ to an element of the form $\left(
\begin{smallmatrix}
0 & 1 \\
-1 & \beta
\end{smallmatrix}
\right)$ for some $\beta \in k$. Indeed, just pick some vector $v \in k^2$ which is not scaled by $g$ and take $\{v,-g(v)\}$ as the new basis. Hence, it is enough to show that $\tau \left(\left(
\begin{smallmatrix}
0 & 1 \\
-1 & \beta
\end{smallmatrix}
\right) \right)=0$ for all $\beta \in k$, since we can replace $\tau$ by $\tau \circ {\rm Ad}(h^{-1})$. It is clear that $\langle U, U^t \rangle = SL_2(k)$, so that we can apply Proposition \ref{propimp}.
We compute 
\begin{equation} \label{eq2}
\tau  \left(\left(
\begin{smallmatrix}
0 & 1 \\
-1 & \beta
\end{smallmatrix}
\right) \right) 
 = \tau \left( \left(
\begin{smallmatrix}
\alpha & 0 \\
0 & \alpha^{-1}
\end{smallmatrix}
\right) \left(
\begin{smallmatrix}
0 & 1 \\
-1 & \beta
\end{smallmatrix}
\right) \left(
\begin{smallmatrix}
\alpha^{-1} & 0 \\
0 & \alpha
\end{smallmatrix}
\right) \right) 
 = \tau \left( \left(
\begin{smallmatrix}
0 & 1 \\
-1 & 0
\end{smallmatrix}
\right) \left(
\begin{smallmatrix}
\alpha^{-2} & -\beta \\
0 & \alpha^2
\end{smallmatrix}
\right) \right) 
\end{equation}
for all $\alpha \in k^{\times}$. Let $\Sigma \subset k^{\times}$ be an infinite subset such that $\alpha^4 \neq 1$ for all $\alpha \in \Sigma$ and that for each pair of elements $\alpha,\alpha' \in \Sigma$, $\alpha^{4} \neq \alpha'^4$.
Let $\alpha,\alpha' \in \Sigma$ be a pair of elements. Then, 
we compute \begin{equation} \label{eq3}\left(
\begin{smallmatrix}
\alpha^{-2} & \beta \\
0 & \alpha^2
\end{smallmatrix}
\right)^{-1} \left(
\begin{smallmatrix}
\alpha'^{-2} & \beta \\
0 & \alpha'^2
\end{smallmatrix}
\right) = \left(
\begin{smallmatrix}
\alpha^2\alpha'^{-2} & \beta(\alpha^2-\alpha'^2) \\
0 & \alpha^{-2}\alpha'^2
\end{smallmatrix}
\right).\end{equation}
Note that if $\alpha^4 \neq \alpha'^4$, then we have
\begin{equation}\label{eqconj}
\left(
\begin{smallmatrix}
1 & \gamma \\
0 & 1
\end{smallmatrix}
\right)\left(
\begin{smallmatrix}
\alpha^2\alpha'^{-2} & 0 \\
0 & \alpha^{-2}\alpha'^2
\end{smallmatrix}
\right)\left(
\begin{smallmatrix}
1 & -\gamma \\
0 & 1
\end{smallmatrix}
\right)=\left(
\begin{smallmatrix}
\alpha^2\alpha'^{-2} & \beta(\alpha^2-\alpha'^2) \\
0 & \alpha^{-2}\alpha'^2
\end{smallmatrix}
\right)\end{equation} 
with $\gamma = -\beta(\alpha \alpha')^{2}(\alpha^2 + \alpha'^2)^{-1}$.

By the previous proposition, with $\alpha, \alpha' \in \Sigma$, as $\alpha^2\alpha'^{-2}$ tends to infinity in $k^\times$ we have $$\tau\left(\left(
\begin{smallmatrix}
\alpha^2\alpha'^{-2} & \beta(\alpha^2-\alpha'^2) \\
0 & \alpha^{-2}\alpha'^2
\end{smallmatrix}
\right) \right) = 
\tau\left(\left(
\begin{smallmatrix}
\alpha^2\alpha'^{-2} & 0 \\
0 & \alpha^{-2}\alpha'^2
\end{smallmatrix}
\right) \right) \to
0.$$ %if $\alpha^4 \neq \alpha'^4$ and 
Equation \eqref{eq3} then implies that the elements in the set $$\left\{ \left(
\begin{smallmatrix}
\alpha^{-2} & \beta \\
0 & \alpha^2
\end{smallmatrix}
\right) \mid \alpha \in \Sigma \right\}$$ are asymptotically orthogonal with respect to $\tau$. By Lemma \ref{elementary}, this shows that the right side of Equation \eqref{eq2} is equal to zero. This proves the claim.
\end{proof}

\begin{rem}
Clearly, for the conclusion of Theorem \ref{rigid} it is necessary to assume that $k$ is infinite. However, there are quantitative results even if $k$ is finite. Indeed, if $k$ is a finite field, $m \geq 2$, and $\chi \colon PSL_m(k) \to \C$ is any non-constant irreducible character, then Gluck \cite{gluck} showed that
$$|\chi(g)| \leq C \cdot |k|^{-1/2} \cdot \chi(e)$$
for some universal constant $C>0$ and all non-trivial $g \in PSL_m(k)$.
\end{rem}

\subsection{Characters on $SL_2(BS^{-1})$}
\label{charactersonrings}
The objective of this section is to show a rigidity result for characters on the group $SL_2(BS^{-1})$, where we follow the notation of Morris \cite{witte} and denote by $B$ an order in the ring of integers of a number field $K$, and by $S \subset B$ a multiplicative subset. Now, $BS^{-1}$ denotes the localization of $B$ at the set $S$. Our results in this section apply when $BS^{-1}$ contains infinitely many units. The prototypical example to keep in mind is ${\mathbb Z}[\frac1p]$ for some prime $p$. 
Our main result is:

\begin{thm} \label{secondmain}
Let $B$ an order in the ring of algebraic integers of some number field. Let $S \subset B$ be a multiplicative subset and $BS^{-1}$ the corresponding localization. Assume that $BS^{-1}$ has infinitely many units. Let $\tau: SL_2(BS^{-1}) \to \mathbb C$ be an extremal character. Then either 
\begin{enumerate}
\item the character $\tau$ factorizes through a finite quotient of $SL_2(BS^{-1})$, or 
\item $\tau$ is induced from $C \subset SL_2(BS^{-1})$.
\end{enumerate}
\end{thm}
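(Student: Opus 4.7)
The plan is to apply Proposition \ref{propimp} to $R = BS^{-1}$ and then upgrade the resulting dichotomy to full character rigidity. First I would verify the hypothesis of Proposition \ref{propimp}: since $BS^{-1}$ is a localization of an order in a number field, it is a one-dimensional Noetherian domain with finite residue fields, and hence $BS^{-1}/(\alpha^2-1)BS^{-1}$ is finite for every $\alpha \in (BS^{-1})^\times$ with $\alpha^2 \neq 1$, so $U_\alpha$ has finite index in $U$ and likewise $U_\alpha^t$ in $U^t$. Combined with standard generation results for $SL_2$ over $S$-arithmetic rings with infinitely many units (in the style of Bass-Lazard-Serre), this implies that $\langle U_\alpha, U_\alpha^t\rangle$ contains a principal congruence subgroup and so has finite index in $SL_2(BS^{-1})$.

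Applying Proposition \ref{propimp} then yields the dichotomy: either $\tau$ factors through a finite quotient, giving case (1) of the theorem, or the uniform vanishing statement of Proposition \ref{propimp}(2) holds. In the latter case I would adapt the strategy of Theorem \ref{rigid}, with one essential modification: the field-case argument uses the conjugation identity \eqref{eqconj} to replace a triangular matrix by a diagonal one, which requires $(\alpha^2+\alpha'^2)^{-1}$ to lie in the ring---a condition that generically fails in $BS^{-1}$. In its place I would invoke Proposition \ref{propimp}(2) directly, crucially exploiting that its bound is uniform in the top-right entry. Fixing an infinite-order unit $\mu \in (BS^{-1})^\times$ (which exists by the standing hypothesis) and setting $\alpha_m := \mu^m$, the identity \eqref{eq2} expresses $\tau\bigl(\bigl(\begin{smallmatrix} 0 & 1 \\ -1 & \beta \end{smallmatrix}\bigr)\bigr)$ as a common inner product $\langle \xi_m, \eta\rangle$, where $\eta$ is a fixed vector independent of $m$ and $\xi_m := \pi\bigl(\bigl(\begin{smallmatrix} \mu^{-2m} & -\beta \\ 0 & \mu^{2m} \end{smallmatrix}\bigr)\bigr)\Omega$. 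By \eqref{eq3} the inner product $\langle \xi_m, \xi_{m'}\rangle$ equals $\tau$ of an upper-triangular matrix with diagonal $(\mu^{2(m'-m)}, \mu^{-2(m'-m)})$, which Proposition \ref{propimp}(2) bounds by $\varepsilon$ uniformly in $\beta$ as soon as $|m-m'|$ is large. Selecting $m_1 < \cdots < m_n$ with sufficiently large gaps and applying Lemma \ref{elementary} forces $\tau\bigl(\bigl(\begin{smallmatrix} 0 & 1 \\ -1 & \beta \end{smallmatrix}\bigr)\bigr) = 0$ for every $\beta \in BS^{-1}$.

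For an arbitrary non-central $g \in SL_2(BS^{-1})$, I would observe that for any $h \in GL_2(BS^{-1})$ the conjugation $\Ad(h^{-1})$ preserves $SL_2(BS^{-1})$, so $\tau \circ \Ad(h^{-1})$ is again a character in case (ii); applying the previous step to it yields $\tau(g)=0$ whenever $g$ is $GL_2(BS^{-1})$-conjugate to its companion form $\bigl(\begin{smallmatrix} 0 & 1 \\ -1 & \trace(g) \end{smallmatrix}\bigr)$, equivalently whenever the binary quadratic form $Q_g(x,y) := cx^2 + (d-a)xy - by^2$ represents a unit of $BS^{-1}$.

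The main obstacle, and the genuinely new difficulty beyond the field case, is that $Q_g$ may fail to represent a unit: for instance $g = \operatorname{diag}(p, p^{-1}) \in SL_2(\Z[\tfrac{1}{p}])$ has $Q_g$ taking values only in the ideal $(1-p^2)\Z[\tfrac{1}{p}]$, which contains no unit---reflecting an ideal-class obstruction to integral diagonalization. To circumvent this I would bootstrap: for such $g$, any unit $\gamma \in (BS^{-1})^\times$, and any $\gamma' \in BS^{-1}$, a direct computation shows that $g u_\gamma^t u_{\gamma'}$ has unit bottom-left entry, so the previous step forces $\tau(g u_\gamma^t u_{\gamma'}) = 0$. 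Translated into the GNS setup, this yields the orthogonality $\pi(g)\Omega \perp \pi(u_{-\gamma'} u_{-\gamma}^t)\Omega$ in $\HH_\tau$. Using that $(BS^{-1})^\times$ additively generates $BS^{-1}$ (a consequence of having infinitely many units in a Dedekind-type ring) together with the generation of $SL_2(BS^{-1})$ by $U$ and $U^t$, one concludes that $\pi(g)\Omega$ is orthogonal to a total subset of $\HH_\tau$, and in particular to $\Omega$, giving $\tau(g) = 0$ and establishing case (2) of the theorem.
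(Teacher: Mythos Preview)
Your treatment up through the vanishing of $\tau$ on companion matrices $\left(\begin{smallmatrix} 0 & 1 \\ -1 & \beta \end{smallmatrix}\right)$ is correct and matches the paper's argument, including the key observation that one must use the uniformity in $\beta$ from Proposition~\ref{propimp}(2) to bypass the unavailable conjugation \eqref{eqconj}. The characterization of $GL_2(BS^{-1})$-conjugacy to companion form via $Q_g$ representing a unit is also correct.

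The gap is in your bootstrap for general non-central $g$. First, the claim that $g u_\gamma^t u_{\gamma'}$ has unit bottom-left entry $c+d\gamma$ for \emph{every} unit $\gamma$ is false: for $g=\left(\begin{smallmatrix} 3 & 4 \\ 2 & 3 \end{smallmatrix}\right)\in SL_2(\Z[1/5])$ one checks that $Q_g(x,y)=2(x^2-2y^2)$ never represents a unit (parity obstruction), yet $c+d\gamma=2+3\cdot 5=17$ is a non-unit. Second, and more seriously, even if you had $\pi(g)\Omega\perp\pi(u_{-\gamma'}u_{-\gamma}^t)\Omega$ for all units $\gamma$ and all $\gamma'$, this would not give $\tau(g)=0$. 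The vectors $\pi(h)\Omega$ for $h$ in the \emph{set} $U\cdot U^t$ are not total in $\HH_\tau$: group generation by $U\cup U^t$ says nothing about the linear span of the corresponding GNS vectors (in the regular representation these are just the delta functions supported on the product set, a proper subspace of $\ell^2$). Indeed, if the set were total, your orthogonalities would force $\pi(g)\Omega=0$, contradicting $\|\pi(g)\Omega\|=1$.

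The paper handles general $g$ by an entirely different mechanism. One passes to $\tau_2:=|\tau|^2$, invokes Proposition~\ref{weakmix} to know $\tau$ (hence $\tau_2$) is weakly mixing, and conjugates $g$ to companion form by some $h\in GL_2(K)$ over the \emph{number field}. Then $\Gamma:=SL_2(BS^{-1})\cap hSL_2(BS^{-1})h^{-1}$ has finite index, the transported character $\tau_3:=\tau_2\circ\Ad(h^{-1})$ on $\Gamma$ is still weakly mixing, and one induces it up to a character $\tau_4$ on $SL_2(BS^{-1})$ whose factor decomposition is almost surely type~${\rm II}_1$; the already-established vanishing on companion matrices then gives $\tau_4\left(\begin{smallmatrix} 0 & 1 \\ -1 & \trace(g) \end{smallmatrix}\right)=0$, and the nonnegativity of the summands in the induction formula forces $\tau_3$, hence $\tau_2$, hence $\tau$, to vanish at $g$. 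The essential ideas you are missing are the passage to $GL_2$ of the field (where conjugation to companion form is always available), the finite-index commensuration argument, and the positivity trick via $|\tau|^2$.
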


Recall that a unitary representation $\pi \colon \Gamma \to U(\HH)$ is called weakly mixing, if for any finite set of vectors $F \subset \HH$ and all $\varepsilon>0$, there exists some $g \in \Gamma$, such that $\langle \pi(g)\xi,\eta \rangle < \varepsilon$ for all $\xi,\eta \in F$. We call a character $\tau \colon \Gamma \to \C$ weakly mixing if the GNS-representation associated with $\tau$ is weakly mixing. It is easy to see that a character $\tau$ is weakly mixing, if for any finite set $F \subset \Gamma$ and $\varepsilon>0$, there exists $g \in \Gamma$, such that $|\tau(gh)|< \varepsilon$ for all $h \in F$.

It is a well-known result of Dye \cite{dye} that a unitary representation is weakly mixing if and only if it does not contain any finite-dimensional sub-representation, and in particular, weak mixingness passes to finite index subgroups. Note that the only if-part is obvious. In order to see the converse implication, let $F \subset \HH$ be a finite set of vectors, such that for any $g \in \Gamma$, there exist $\xi,\eta \in F$, such that $\langle \pi(g)\xi,\eta \rangle \geq \varepsilon$. Let $p \in B(\HH)$ be the orthogonal projection onto the span of $F$. It follows that $${\rm tr}( \pi(g) p \pi(g)^* p) \geq \delta$$ for some constant $\delta>0$.
Taking the barycenter of the set $$\{ \pi(g) p \pi(g)^* \mid g \in \Gamma \}$$ inside the space of Hilbert-Schmidt operators, we obtain a Hilbert-Schmidt operator $q$, which is $\Gamma$-invariant and ${\rm tr}(qp) \geq \delta$. In particular, we get $q\neq 0$. Any eigenspace of $q$ will provide a finite-dimensional sub-representation.

In order to apply Proposition \ref{propimp} to the ring $BS^{-1}$, we need the following lemma.

\begin{lem} \label{lemwitte}
Let $BS^{-1}$ be as above and assume that $BS^{-1}$ contains infinitely many units.
Then, the subgroup $\langle U_{\alpha}, U^t_{\alpha} \rangle \subset SL_2(BS^{-1})$ is of finite index for every $\alpha \in BS^{-1}$ with $\alpha^2 \neq 1$.
 \end{lem}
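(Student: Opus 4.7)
The plan is to sandwich $H := \langle U_\alpha, U_\alpha^t\rangle$ between a principal congruence subgroup that is visibly of finite index in $SL_2(R)$ (writing $R := BS^{-1}$), and then to invoke the arithmetic theory of elementary generation to see that $H$ itself captures all but a finite part of that congruence subgroup.

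First I would set $I := (\alpha^2-1)R$. Since $\alpha^2 \neq 1$ and $R$ is a domain, $I$ is a nonzero proper ideal. Because $R$ is the localization of an order in a number field, it is one-dimensional and Noetherian, so every nonzero ideal has finite additive index; in particular $R/I$ is a finite ring. Consequently $SL_2(R/I)$ is a finite group, and the principal congruence subgroup
\[ \Gamma(I) := \ker\bigl(SL_2(R) \to SL_2(R/I)\bigr) \]
has finite index in $SL_2(R)$. By definition $U_\alpha = E_{12}(I)$ and $U_\alpha^t = E_{21}(I)$ both lie in $\Gamma(I)$, hence $H \subseteq \Gamma(I)$.

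The arithmetic core of the argument is then to show $[\Gamma(I) : H] < \infty$. Here I would cite the classical theory of $E_2(R, I)$ for $S$-arithmetic $R$ with $R^\times$ infinite (Serre's congruence subgroup property for $SL_2$, Liehl and Vaserstein's bounded elementary generation): under these hypotheses the elementary subgroup $\langle E_{12}(I), E_{21}(I)\rangle$ has finite index in the principal congruence subgroup $\Gamma(I)$; this is essentially \cite{witte}. (The infinitude of $R^\times$ is essential, allowing a diagonal unit of infinite order to conjugate enough elementary generators into the ideal.) Combining this with the elementary observation $[SL_2(R) : \Gamma(I)] < \infty$ from the previous paragraph yields $[SL_2(R) : H] < \infty$.

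The main obstacle is the second step: the finite-index statement $[\Gamma(I) : \langle E_{12}(I), E_{21}(I)\rangle] < \infty$ is genuinely arithmetic and fails in general if $R^\times$ is finite (e.g.\ $R = \Z$, where the relevant $E_2$-subgroups can have infinite index). Everything else — the finiteness of $R/I$ and the containment $H \subseteq \Gamma(I)$ — is routine Noetherian ring theory.
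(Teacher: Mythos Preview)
Your proposal is correct and uses essentially the same approach as the paper: both invoke the arithmetic results in Morris \cite{witte} (following Carter--Keller--Paige, Serre, Liehl, Vaserstein) on elementary generation of congruence subgroups in $SL_2$ over rings with infinitely many units. The paper cites Lemma~6.9 and Theorem~6.1(1) of \cite{witte} specifically, exhibiting a finite-index normal elementary subgroup $E^\lhd(2, BS^{-1}, \mathfrak{q}')$ \emph{inside} $H$, whereas your sandwich through $\Gamma(I)$ is a harmless repackaging of the same content (the containment $H \subseteq \Gamma(I)$ is not actually needed for the argument).
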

\begin{proof}
Following work of Carter-Keller-Paige, Morris 
showed in \cite[Lemma 6.9]{witte} that if $\alpha^2 \neq 1$, then $\langle U_{\alpha}, U^t_{\alpha} \rangle =: {\rm E}(2,BS^{-1}, \langle \alpha^2-1 \rangle )$ contains the subgroup ${\rm E}^{\lhd}(2,BS^{-1},\mathfrak q')$ for some non-zero ideal ${\mathfrak q'} \subset BS^{-1}$, see the terminology of \cite{witte}. Now,  \cite[Theorem 6.1(1)]{witte} implies that ${\rm E}^{\lhd}(2,BS^{-1},\mathfrak q') \subset SL_2(BS^{-1})$ is a subgroup of finite index. Hence, the inclusion $\langle U_{\alpha}, U^t_{\alpha} \rangle \subset SL_2(BS^{-1})$ has finite index for every $\alpha \in BS^{-1}$ with $\alpha^2 \neq 1$.
\end{proof}

\begin{prop} \label{weakmix}
Let $BS^{-1}$ be as above and assume that $BS^{-1}$ contains infinitely many units. Let $$\tau: SL_2(BS^{-1}) \to \mathbb C$$ be an extremal character. Then, either the character $\tau$ factorizes through a finite quotient of $SL_2(BS^{-1})$, or $\tau$ is weakly mixing.
\end{prop}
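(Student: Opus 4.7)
The plan is to argue the contrapositive: assuming $\tau$ does not factor through a finite quotient, show that $\tau$ is weakly mixing. By Lemma \ref{lemwitte}, the hypothesis of Proposition \ref{propimp} is satisfied with $R = BS^{-1}$; moreover, since $(BS^{-1})^{\times}$ is a finitely generated abelian group (Dirichlet's $S$-unit theorem) that is infinite by assumption, it contains some $\alpha$ of infinite order. Specialising conclusion (2) of Proposition \ref{propimp} at $\beta = 0$ gives
\[
\tau(d_k) \to 0 \quad \text{as } |k| \to \infty, \qquad d_k := \left(\begin{smallmatrix}\alpha^k & 0 \\ 0 & \alpha^{-k}\end{smallmatrix}\right).
\]

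Extremality of $\tau$ implies that $N := L(\Gamma,\tau)$ is a finite factor, and the heart of the argument is to show that $N$ is of type $II_1$. Suppose instead that $N \cong M_n(\C)$ for some $n \geq 1$. Then $\pi(g) \in U(N) = U(n)$ defines a finite-dimensional unitary representation $\rho \colon \Gamma \to U(n)$, and since there is a unique normalised trace on $M_n(\C)$, we have $\tau(g) = n^{-1}\trace(\rho(g))$. Diagonalising the unitary matrix $\rho(d_1)$ with eigenvalues $\zeta_1,\dots,\zeta_n \in \T$ yields $\trace(\rho(d_k)) = \sum_{i=1}^n \zeta_i^k$, an almost periodic function of $k \in \Z$. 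However, the Ces\`aro computation
\[
\lim_{K\to\infty}\frac{1}{2K+1}\sum_{k=-K}^{K}\Bigl|\,\textstyle\sum_{i=1}^n\zeta_i^k\Bigr|^2 \;=\; \bigl|\{(i,j) : \zeta_i=\zeta_j\}\bigr| \;\geq\; n \;\geq\; 1
\]
is incompatible with the vanishing $\sum_i \zeta_i^k \to 0$ extracted above, so this case cannot occur.

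Hence $N$ is a $II_1$ factor. In the standard form $\HH = L^2(N,\tau)$, the commutant $N' = JNJ$ is again of type $II_1$, and in particular contains no non-zero projection of finite rank: for any non-zero projection $p \in N'$, the corner $pN'p$ is a $II_1$ factor whose standard module $pL^2(N,\tau)$ is infinite-dimensional, so $p$ has infinite rank in $B(\HH)$. Consequently $\pi$ admits no non-zero finite-dimensional invariant subspace, which is the definition of $\tau$ being weakly mixing. The main technical obstacle is the type dichotomy: ruling out the type $I_n$ case requires matching the decay of $\tau(d_k)$ coming from Proposition \ref{propimp} against the rigidity of characters of finite-dimensional unitary representations of the cyclic group $D_\alpha$; once this is done, weak mixing follows from the standard infinite-rank property of non-zero operators in a $II_1$ factor acting on its standard form.
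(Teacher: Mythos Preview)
Your argument is correct and is precisely the route the paper mentions but does not write out. The paper reduces the statement to showing that any finite-dimensional unitary representation of $SL_2(BS^{-1})$ factors through a finite quotient, remarks that ``this follows easily from Lemma~\ref{lemwitte} and Proposition~\ref{propimp}'', and then instead spells out a more elementary direct argument: diagonalise the restriction of $\pi$ to the unipotent subgroup $U\cong B$, observe that conjugation by $\left(\begin{smallmatrix}\alpha & 0\\ 0 & \alpha^{-1}\end{smallmatrix}\right)$ permutes the resulting $d$ characters of $(B,+)$, so that $\left(\begin{smallmatrix}1 & \alpha^{2d!}-1\\ 0 & 1\end{smallmatrix}\right)$ lies in $\ker\pi$, and its normal closure has finite index by \cite{witte}. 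Your approach runs the contrapositive and takes the other branch: assuming $\tau$ does not factor through a finite quotient, Proposition~\ref{propimp} forces $\tau(d_k)\to 0$, and the almost-periodicity of $k\mapsto\trace(\rho(d_1)^k)$ then excludes any finite-dimensional realisation. Both arguments exploit the same mechanism---the conjugation action of a unit of infinite order---but the paper's version is self-contained and does not rely on the machinery of Proposition~\ref{propimp}, whereas yours is shorter once that proposition is in hand.

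One minor quibble: invoking Dirichlet's $S$-unit theorem is heavier than needed, and since $B$ is only an order and $S$ an arbitrary multiplicative set, the theorem does not literally apply. It suffices to observe, as the paper does, that a number field contains only finitely many roots of unity, so an infinite group of units must contain an element of infinite order.
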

\begin{proof} Let $\pi \colon SL_2(BS^{-1}) \to U(\HH)$ be the GNS-representation associated with $\tau$. We have argued above that $\tau$ is weakly mixing if and only if $\pi$ does not have any finite-dimensional sub-representation. Thus, by extremality, we may assume that the dimension of $\HH$ is finite. We set $d: = \dim \HH$. It is another well-known (but non-trivial) fact that if $BS^{-1}$ contains infinitely many units, then any finite-dimensional unitary representation of $SL_2(BS^{-1})$ factorizes through a finite quotient. This follows easily from Lemma \ref{lemwitte} and Proposition \ref{propimp}, or one can proceed more directly as follows: The restriction of $\pi$ to $B = \left\{ \left(
\begin{smallmatrix}
1 & \beta \\
0 & 1
\end{smallmatrix}
\right) \mid \beta \in B \right\} \subset SL_2(BS^{-1})$ is diagonalisable and we may assume that 
$$\tau(\left(
\begin{smallmatrix}
1 & \beta \\
0 & 1
\end{smallmatrix}
\right)) = \frac1d \sum_{k=1}^d \varphi_i(\beta), \quad \forall \beta \in B,$$
for some $\varphi_1,\dots, \varphi_d \in {\rm Hom}(B,\mathbb T)$. Note that this sum decomposition is unique up to permutation of summands.
Because $BS^{-1}$ contains infinitely many units, there is some unit $\alpha \in BS^{-1}$ which is not a root of unity, as $BS^{-1}$ is contained in a finite extension $K$ of ${\mathbb Q}$. It follows from conjugation invariance that $$\tau\left(\left(
\begin{smallmatrix}
1 & \alpha^2\beta \\
0 & 1
\end{smallmatrix}
\right)\right)= \tau\left(\left(
\begin{smallmatrix}
\alpha & 0 \\
0 & \alpha^{-1}
\end{smallmatrix}
\right)\left(
\begin{smallmatrix}
1 & \beta \\
0 & 1
\end{smallmatrix}
\right)\left(
\begin{smallmatrix}
\alpha^{-1} & 0 \\
0 & \alpha
\end{smallmatrix}
\right)\right) =\tau(\left(
\begin{smallmatrix}
1 & \beta \\
0 & 1
\end{smallmatrix}
\right)),\quad \forall \beta \in B.$$ This can only happen if $\varphi_k(\beta) = \varphi_{\sigma(k)}(\alpha^2\beta)$, for some permutation $\sigma \in S_d$, all $\beta \in B$, and all $1 \leq k \leq d$. Thus, $\varphi_k(\beta) = \varphi_k(\alpha^{2d!}\beta)$ for all $\beta \in B$ and all $1 \leq k \leq d$. In this situation, we conclude that 
$$\pi\left(\left(
\begin{smallmatrix}
1 & \alpha^{2d!}-1 \\
0 & 1
\end{smallmatrix}
\right) \right) = {\rm id}_{\HH}.$$
Now, the normal subgroup generated by $\left(
\begin{smallmatrix}
1 & \alpha^{2d!}-1 \\
0 & 1
\end{smallmatrix}
\right)$ has finite index in the group $SL_2(BS^{-1})$, see for example \cite[Theorem 5.13]{witte}.
\end{proof}

\begin{proof}[Proof of Theorem \ref{secondmain}:]
It is enough to consider the case when $S$ is finite as the general result then follows by considering inductive limits. 
Note again that if $BS^{-1}$ contains infinitely many units, it must contain a unit of infinite order, say $\alpha \in BS^{-1}$.
We will assume that we are not in the first case, i.e.\ that $\tau$ does not factor through a finite quotient. The same argument in Theorem~\ref{rigid}, now using the full strength of Proposition \ref{propimp} and Lemma \ref{lemwitte}, shows that if 
$g = \left(
\begin{smallmatrix}
0 & 1 \\
-1 & \beta
\end{smallmatrix}
\right)$ for some $\beta \in BS^{-1}$, then we have $\tau(g) = 0$. The only subtlety is that the $\gamma$ in Equation \eqref{eqconj} may not exist in $SL_2(BS^{-1})$, thus one needs to use the full strength of Proposition \ref{propimp} to see that the sequence $\left(
\begin{smallmatrix}
\alpha^{-2n} & -\beta \\
0 & \alpha^{2n}
\end{smallmatrix}
\right)$ contains large sets of asymptotically orthogonal elements with respect to $\tau$. Indeed, for given $\varepsilon>0$, Proposition \ref{propimp} shows that there exists $k_0$ such that $$\left|\tau\left(\left(
\begin{smallmatrix}
\alpha^{2(n-m)k_0} & -(\alpha^{2nk_0} - \alpha^{2mk_0})\beta \\
0 & \alpha^{-2(n-m)k_0}
\end{smallmatrix}
\right) \right)  \right| < \varepsilon, \quad  \forall \beta \in R, n \neq m.$$ Hence, the set 
$\left\{\left(
\begin{smallmatrix}
\alpha^{-2nk_0} & -\beta \\
0 & \alpha^{2nk_0}
\end{smallmatrix}
\right) \mid n \in \Z \right\}$
consists of pairwise $\varepsilon$-orthogonal elements with respect to $\tau$. By Equation \eqref{eq2}, we have
\begin{equation}
\tau  \left(\left(
\begin{smallmatrix}
0 & 1 \\
-1 & \beta
\end{smallmatrix}
\right) \right) 
 = \tau \left( \left(
\begin{smallmatrix}
0 & 1 \\
-1 & 0
\end{smallmatrix}
\right) \left(
\begin{smallmatrix}
\alpha^{-2nk_0} & -\beta \\
0 & \alpha^{2nk_0}
\end{smallmatrix}
\right) \right) 
\end{equation}
for all $n \in \Z$.
By Lemma \ref{elementary}, this is enough to conclude $|\tau(g)| = \left|\tau  \left(\left(
\begin{smallmatrix}
0 & 1 \\
-1 & \beta
\end{smallmatrix}
\right) \right) \right|\leq 5 \varepsilon^{1/4}$. Since $\varepsilon>0$ was arbitrary, it follows that $\tau(g)=0$ as before.

The study of more general $g \in SL_2(BS^{-1}) \setminus C$ needs some preparation.
Since we are not in the first case by assumption, Proposition \ref{weakmix} yields that the GNS-representation $$\pi \colon SL_2(BS^{-1}) \to U(\HH)$$ associated with $\tau$ is weakly mixing. It is clear, that the character $\tau_2(g) := |\tau(g)|^2$ is also weakly mixing and satisfies in addition $\tau_2(g) \in [0,1]$.

Let now $h \in GL_2(K)$ be some element such that $hgh^{-1} = \left(
\begin{smallmatrix}
0 & 1 \\
-1 & \beta
\end{smallmatrix}
\right)$ for some $\beta \in K$. Note that $\beta = {\rm tr}(hgh^{-1}) = {\rm tr}(g) \in BS^{-1}$ and hence $hgh^{-1} \in SL_2(BS^{-1})$. We may set $$\Gamma := SL_2(BS^{-1}) \cap hSL_2(BS^{-1})h^{-1} \subset SL_2(K)$$ and define a new character $
\tau_3 \colon \Gamma \to \C$ by the formula $\tau_3(q) := \tau_2(h^{-1}qh)$. The inclusion $\Gamma \subset SL_2(BS^{-1})$ has finite index since it contains the congruence subgroup associated with the product of all denominators occuring in $h$ and $h^{-1}$. Now, this implies that $\tau_3$ is again weakly mixing. In order to conclude, it is enough to show that $\tau_3\left(\left(
\begin{smallmatrix}
0 & 1 \\
-1 & \beta
\end{smallmatrix}
\right)\right) = 0.$

Let $\Xi \subset SL_2(BS^{-1})$ be a finite set of representatives of $SL_2(BS^{-1})/\Gamma$. We define $\tau_4 \colon SL_2(BS^{-1}) \to \C$ by the formula 
\begin{equation} \label{eqchar}
\tau_4(q) := \frac1{|\Xi|} \sum_{p \in \Xi, p^{-1}qp \in \Gamma} \tau_3(p^{-1}qp).
\end{equation} Now, $\tau_4$ is also a weakly mixing character of $SL_2(BS^{-1})$. Let $\pi_4$ be the GNS-representation associated with $\tau_4$. It follows that no sub-representation of $\pi_4$ can factor through a finite quotient.  Since $SL_2(BS^{-1})$ is finitely generated it has only countably many finite quotients. Hence, if we consider the integral decomposition of $\pi_4(SL_2(BS^{-1}))''$ into factors, we have that almost surely each factor is type {\rm II}$_1$.

By the observation from the beginning, this implies that
$\tau_4\left(\left(
\begin{smallmatrix}
0 & 1 \\
-1 & \beta
\end{smallmatrix}
\right)\right)=0$.
Since the right side of Equation \eqref{eqchar} has only non-negative summands, it follows that $\tau_3\left(\left(
\begin{smallmatrix}
0 & 1 \\
-1 & \beta
\end{smallmatrix}
\right)\right) =0$.
Finally, this shows that $\tau(g) = 0$ for any non-central element $g$ and finishes the proof.
\end{proof}

\begin{rem}
Note that if $BS^{-1}$ does not contain infinitely many units, then $S$ is trivial and $K$ is either $\mathbb Q$ or an imaginary quadratic extension of $\mathbb Q$. In either case, it was proved by Grunewald-Schwermer \cite{grunew}, that $SL_2(B)$ contains a subgroup of finite index which surjects onto a non-abelian free group. In particular, there is no hope to classify characters.
\end{rem}

\subsection{Characters on $SL_n(BS^{-1})$ for $n \geq 3$}

For convenience we also want to state an extension of the result in the higher rank case.
The techniques which were applied by Bekka \cite{bekka} to treat $SL_n(\Z)$ for $n \geq 3$, combined with the results in \cite{witte} yield the following theorem.

\begin{thm}[Bekka] \label{bekka}
Let $B$ an order in the ring of algebraic integers of some number field. Let $S \subset B$ be a multiplicative subset and $BS^{-1}$ the localization. Let $\tau: SL_n(BS^{-1}) \to \mathbb C$ be an extremal character. Then either 
\begin{enumerate}
\item the character $\tau$ factorizes through a finite quotient of $SL_n(BS^{-1})$, or 
\item $\tau$ is induced from $C \subset SL_n(BS^{-1})$.
\end{enumerate}
\end{thm}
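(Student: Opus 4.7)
The plan is to run Bekka's proof of character rigidity for $SL_n(\Z)$, $n \geq 3$ (\cite{bekka}), with $R := BS^{-1}$ in place of $\Z$, replacing the integer-theoretic inputs by the results of Morris \cite{witte}. Given an extremal $\tau \colon SL_n(R) \to \C$ with GNS representation $\pi$ and von Neumann algebra $N := \pi(SL_n(R))''$, I first reduce to the case where $N$ is a type II$_1$ factor: in the type I$_d$ case, the argument of Proposition \ref{weakmix} extends verbatim to $SL_n$ after replacing Lemma \ref{lemwitte} by the stronger finite-index conclusion of \cite[Theorem 6.1]{witte}, forcing $\tau$ to factor through a finite quotient.

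Next, for each ordered pair $i \neq j$ I consider the root subgroup $U_{ij} := \{I + \beta E_{ij} \mid \beta \in R\}$ and its subgroups $U_{ij,\alpha} := \{I + \beta E_{ij} \mid \beta \in (\alpha^2-1)R\}$ for $\alpha \in R^\times$, together with the projections $p_{ij}$ and $p_{ij,\alpha}$ in $N$ onto the respective invariant vectors. Fixing a diagonal element $h_{ij,\alpha} \in SL_n(R)$ that conjugates $U_{ij}$ by scaling with $\alpha^2$, Bekka's normality trick, applied exactly as in the proof of Proposition \ref{propimp}, produces supremum projections $p^{(ij)} := \sup_k p_{ij,\alpha^{k!}}$ satisfying $\tau(\pi(h)(p^{(ij)})^\perp) = 0$ for every $h \in U_{ij} \rtimes \langle h_{ij,\alpha} \rangle$ lying outside $U_{ij}$.

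The decisive step is the following dichotomy. If $p_{ij} \neq 0$ for some pair $i \neq j$, I use the commutator identity $[E_{ij}(a), E_{jk}(b)] = E_{ik}(ab)$ — available precisely because $n \geq 3$, which is what makes the argument cleaner here than in Theorem \ref{secondmain} — together with conjugations by the diagonal torus to propagate invariance across root subgroups and obtain a nonzero invariant vector for the subgroup ${\rm E}^{\lhd}(n, R, \mathfrak q) \subset SL_n(R)$ for some nonzero ideal $\mathfrak q \subset R$. By \cite[Theorem 6.1]{witte} this subgroup has finite index, so extremality forces case (1), contradicting the type II$_1$ hypothesis. If instead every $p_{ij} = 0$, then combining the normality-trick conclusion across all root subgroups with bounded generation of $SL_n(R)$ by elementary matrices (again from \cite{witte}) and the asymptotic-orthogonality argument of Theorem \ref{rigid} (via Lemma \ref{elementary}) shows that $\tau$ vanishes off the center $C$, giving case (2).

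The main obstacle is the arithmetic bookkeeping in the general ring $R$: one must verify that every normal subgroup generated by a single elementary matrix with entry in a nonzero ideal contains a congruence subgroup of finite index, and that $SL_n(R)$ is boundedly generated by elementary matrices over such ideals. Both facts are supplied by \cite[Lemma 6.9, Theorem 6.1]{witte}. Once this bookkeeping is in place, the weak-mixing reduction needed in the $n=2$ case (Proposition \ref{weakmix} and the finite-quotient surgery in the proof of Theorem \ref{secondmain}) can be bypassed entirely, since for $n \geq 3$ the commutator relations between distinct root subgroups directly furnish enough ideal-theoretic control.
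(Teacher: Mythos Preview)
Your proposal is essentially what the paper prescribes: the paper omits the proof entirely, stating only that one follows Bekka's argument in \cite{bekka} ``almost verbatim'' with the arithmetic inputs replaced by those of Morris \cite{witte}, which is precisely your plan. The paper also mentions a second route you do not take, namely deducing the result more directly from Bekka's theorem for $SL_n(\Z)$ via the inclusion $SL_n(\Z) \subset SL_n(BS^{-1})$.

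Two comments on your sketch. First, your dichotomy is phrased on a single root projection $p_{ij}$, but knowing $p_{ij} \neq 0$ for one pair does not by itself give a vector fixed by enough root subgroups for the commutator identity to propagate; Bekka's actual dichotomy is on the invariant vectors of the full unipotent radical of a maximal parabolic (an abelian group normalized by the Levi $SL_{n-1}$), and the propagation goes through the action of the Levi rather than through bare commutators between root subgroups. Second, in the branch where all such invariant vectors vanish, you import the asymptotic-orthogonality machinery of Theorem~\ref{rigid} and Lemma~\ref{elementary}; this is the $n=2$ technology developed in this paper precisely because commutator relations are unavailable there. For $n \geq 3$ Bekka's argument is more direct --- via the spectral measure on the dual of the unipotent radical and the transitivity of the $SL_{n-1}$-action there --- and does not need these tools. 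Importing them is not wrong, but it obscures the point that the $n \geq 3$ case is genuinely easier.
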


We omit the proof since one can follow the line of arguments in \cite{bekka} almost verbatim. Since $SL_n(\Z) \subset SL_n(BS^{-1})$, one can also prove rigidity more directly assuming Bekka's result for $SL_n(\Z)$.

\section{Operator algebraic superrigidity and free actions}
\label{oas}

\subsection{Rigidity for measure-preserving actions on probability spaces}

In this section we clarify the connection between character rigidity and rigidity of measure-preserving group actions on probability measure spaces. Theorem~\ref{essfree} below was obtained independently by Dudko and Medynets in \cite{dudkomedynets}. 

\begin{prop}
Suppose $\Gamma$ is a countable group, and $\Gamma \actson (X, \nu)$ is a probability measure-preserving action such that almost every $\Gamma$-orbit is infinite. Let $\mathcal R$ denote the equivalence relation generated by this action, and let $m$ the corresponding measure on $\mathcal R$. Then the unitary representation $\Gamma \actson L^2(\mathcal R, m)$ is weakly mixing.
\end{prop}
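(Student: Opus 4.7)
The plan is to invoke the Dye criterion recalled just before Proposition~\ref{weakmix}, namely that a unitary representation is weakly mixing if and only if it contains no nonzero finite-dimensional subrepresentation, and to rule out finite-dimensional invariant subspaces by a Hilbert--Schmidt kernel argument. The representation under consideration is the natural one coming from the left action of $\Gamma$ on the groupoid $\mathcal{R}$, namely $(u_g\phi)(x,y)=\phi(g^{-1}x,y)$; this preserves $m$ because $\Gamma\actson(X,\nu)$ is measure-preserving, and coincides with left multiplication of $\Gamma\subset L(\mathcal{R})$ on the standard $L^2$-space of the orbit equivalence relation.

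Suppose, toward a contradiction, that $V\subset L^2(\mathcal{R},m)$ is a nonzero finite-dimensional $\Gamma$-invariant subspace, and let $P$ be the orthogonal projection onto $V$. Being of finite rank, $P$ is Hilbert--Schmidt, so there is a kernel $K\in L^2(\mathcal{R}\times\mathcal{R},m\otimes m)$ with $(P\psi)(\zeta)=\int K(\zeta,\eta)\psi(\eta)\,dm(\eta)$. Unpacking the intertwining relation $u_gPu_g^*=P$ yields
$$K\bigl((gx,y),(gx',y')\bigr)=K\bigl((x,y),(x',y')\bigr)$$
for $m\otimes m$-a.e.\ quadruple and every $g\in\Gamma$. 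Measure-preservation of the action allows one to disintegrate $m$ equivalently by counting measure in the first or the second coordinate; using the latter, Fubini identifies $L^2(\mathcal{R}\times\mathcal{R},m\otimes m)$ with $\int^\oplus \ell^2([y]\times[y'])\,d\nu(y)\,d\nu(y')$. Under this identification the section $F_{y,y'}(x,x'):=K((x,y),(x',y'))$ is, for $\nu\otimes\nu$-a.e.\ pair $(y,y')$, an $\ell^2$-function on $[y]\times[y']$ that is invariant under the diagonal action of $\Gamma$.

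The infinite-orbit hypothesis now finishes the argument. Let $X_0\subset X$ be the conull $\Gamma$-invariant set on which $|\Gamma\cdot x|=\infty$, equivalently $[\Gamma:\Gamma_x]=\infty$. For any $(y,y')\in X_0\times X_0$ and any $(x,x')\in[y]\times[y']$, the diagonal stabilizer $\Gamma_x\cap\Gamma_{x'}$ is contained in $\Gamma_x$, which has infinite index in $\Gamma$; hence every $\Gamma$-orbit on $[y]\times[y']$ is infinite. A $\Gamma$-invariant $\ell^2$-function that is constant on each such orbit must vanish, so $F_{y,y'}=0$ for $\nu\otimes\nu$-a.e.\ $(y,y')$, whence $K=0$, $P=0$, and $V=0$, contradicting $V\neq 0$. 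The only mildly delicate point is setting up the symmetric Fubini decomposition of $L^2(\mathcal{R}\times\mathcal{R})$ via the second coordinates; once that is in place, the infinite-orbit hypothesis forces $K$ to vanish essentially by definition of $\ell^2$.
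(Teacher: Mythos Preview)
Your argument is correct. It is, however, a different packaging of the same core observation that the paper uses.

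The paper argues via the equivalence ``weak mixing $\Leftrightarrow$ the diagonal representation on $L^2(\mathcal R,m)\otimes\overline{L^2(\mathcal R,m)}\cong L^2(\mathcal R^2,m^2)$ is ergodic''. It then shows directly that no $\Gamma$-invariant subset $E\subset\mathcal R^2$ can have positive finite $m^2$-measure: any such $E$ meets some slice $\Delta\times g_0\Delta$, and since the diagonal $\Gamma$-action on $\mathcal R^2$ has infinite orbits, the invariant saturation forces $m^2(E)=\infty$.

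You instead use the Dye criterion ``weak mixing $\Leftrightarrow$ no finite-dimensional subrepresentation'', identify a putative finite-rank invariant projection with a Hilbert--Schmidt kernel $K\in L^2(\mathcal R^2,m^2)$, and then disintegrate $K$ over pairs $(y,y')$ to obtain $\Gamma$-invariant $\ell^2$-functions on the countable fibers $[y]\times[y']$. The infinite-orbit hypothesis kills these fiberwise. Your argument is thus really showing, by a slightly different route, that $L^2(\mathcal R^2,m^2)$ has no nonzero $\Gamma$-invariant vector---the same endpoint the paper reaches. What your approach buys is that the fiberwise $\ell^2$-vanishing is a one-line consequence of infinite diagonal orbits; what the paper's approach buys is that it avoids invoking the Hilbert--Schmidt kernel identification and works entirely with invariant sets, which some readers may find more transparent measure-theoretically.
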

\begin{proof}
We will show that the diagonal action $\Gamma \actson L^2(\mathcal R^2, m^2)$ is ergodic, or equivalently that there is no non-null finite measure invariant subset of $\mathcal R^2$. Note that the measure $m^2$ on $\mathcal R^2$ is given by 
$$
m^2(E) 
= \iint | E \cap \{ (\Gamma x, x) \times (\Gamma y, y) \mid x, y \in X \} |  d\nu(x) d\nu(y) \nonumber \\
$$

Suppose $E \subset \mathcal R^2$ is a positive measure invariant set. If $\Delta = \{ (x, x) \mid x \in X \}$ then $\Gamma \Delta = \mathcal R$, and hence we may assume that for some $g_0 \in \Gamma$ we have $m^2( E \cap (\Delta \times (g_0 \Delta))) > 0$. Since the action $\Gamma \actson (X, \nu)$ has infinite orbits, so does the action on $\mathcal R$, and hence so does the action on $\mathcal R^2$.
Thus, 
\begin{align}
m^2(E) 
&\geq \iint | E \cap \{ (\gamma x, x) \times (\gamma g_0 y, y) \mid \gamma \in \Gamma, x, y \in X \} | d\nu(x)d\nu(y) \nonumber \\
& = \int_{ E \cap (\Delta \times (g_0 \Delta))} | \{ (\gamma x, x) \times (\gamma g_0 y,  y) \mid \gamma \in \Gamma \} | d\nu(x)d\nu(y) \nonumber \\
& = \infty \cdot m^2( E \cap (\Delta \times (g_0 \Delta))) 
= \infty. \nonumber 
\end{align}
\end{proof}

\begin{thm} \label{essfree}
Suppose $\Gamma$ is a countable i.c.c.\ group such that the only weakly mixing character is the Dirac function $\delta_e$. Then any ergodic probability measure-preserving action of $\Gamma$ on a Lebesgue space is essentially free.
\end{thm}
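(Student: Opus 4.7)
The plan is to associate to the action the character $\varphi_\sigma(g) := \nu(\{x \in X : gx = x\})$, show that it is weakly mixing, and invoke the hypothesis on $\Gamma$ to force $\varphi_\sigma = \delta_e$, which is exactly essential freeness.

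First I would reduce to the case where almost every $\Gamma$-orbit is infinite. The function $x \mapsto |\Gamma x|$ is $\Gamma$-invariant with values in $\{1,2,\dots,\infty\}$, so by ergodicity it is a.s.\ constant; if this constant is a finite integer $n$, disintegrating $\nu$ over $X/\Gamma$ puts uniform mass $1/n$ on each orbit and ergodicity then forces $X/\Gamma$ to be a single atom, so $X$ is a finite $\Gamma$-set. Under the standard convention that a Lebesgue space is non-atomic, or in the presence of any hypothesis ruling out non-trivial finite quotients of $\Gamma$ (as holds in all applications of interest), this case is automatically excluded.

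With a.e.\ orbit infinite, the preceding proposition yields that the unitary representation $\Gamma \actson L^2(\RR, m)$ is weakly mixing. I would then identify the GNS-representation of $\varphi_\sigma$ as a subrepresentation. Setting $\Delta := \{(x,x) : x \in X\} \subset \RR$ and $\Omega := \mathbf{1}_\Delta$, one has $\|\Omega\|^2 = m(\Delta) = \nu(X) = 1$, and with $\Gamma$ acting on $\RR$ by $g \cdot (x,y) = (gx, y)$ a direct computation using the description of $m$ from the previous proof gives
$$\langle \pi(g)\Omega, \Omega \rangle = m(g\cdot \Delta \cap \Delta) = \nu(\{x \in X : gx = x\}) = \varphi_\sigma(g),$$
so $\Omega$ is a tracial cyclic vector and the $\Gamma$-subrepresentation it generates is unitarily equivalent to the GNS-representation of $\varphi_\sigma$.

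Since weak mixing is inherited by subrepresentations (any finite-dimensional $\Gamma$-invariant subspace of a subrepresentation is, a fortiori, one of the ambient representation), the character $\varphi_\sigma$ is weakly mixing, and the hypothesis on $\Gamma$ forces $\varphi_\sigma = \delta_e$. This says precisely that $\nu(\{x \in X : gx = x\}) = 0$ for every $g \neq e$, i.e.\ the action is essentially free. The only substantive step is the clean identification of the GNS triple of $\varphi_\sigma$ as a subrepresentation of $\Gamma \actson L^2(\RR, m)$; once this is in place, the conclusion is immediate from the preceding proposition and the hypothesis on $\Gamma$.
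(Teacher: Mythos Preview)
Your proof is correct and follows essentially the same route as the paper: both invoke the preceding proposition to get weak mixing of $\Gamma \actson L^2(\RR,m)$, use $1_\Delta$ as a tracial vector whose associated character is $\varphi_\sigma(g)=\nu(\mathrm{Fix}(g))$, and then apply the hypothesis to force $\varphi_\sigma=\delta_e$. The only cosmetic difference is that the paper phrases the conclusion as ``the representation is a multiple of the left-regular'' (justified via the inclusion $\pi(\Gamma)''\subset L\RR$ into a II$_1$-factor), whereas you work directly with the GNS subrepresentation generated by $1_\Delta$; your formulation is in fact slightly more direct.
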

\begin{proof}
Let $\Gamma$ be as above and suppose that $\Gamma \actson (X, \nu)$ is an ergodic probability measure-preserving action on a Lebesgue space. Since $(X, \nu)$ has no atoms it follows that almost every $\Gamma$-orbit is infinite, and hence from the previous proposition we have that the representation $\Gamma \actson L^2(\mathcal R, m)$ is weakly mixing. We claim that the representation $\Gamma \actson L^2(\mathcal R, m)$ generates a finite von Neumann algebra. Indeed, it generates a von Neumann subalgebra of the II$_1$-factor $L\mathcal R$ associated to the equivalence relation \cite{feldmoore}. By hypothesis it then follows that $\Gamma \actson L^2(\mathcal R, m)$ is a multiple of the left-regular representation and since $1_\Delta$ is a tracial vector we have that for each $\gamma \in \Gamma \setminus \{e\}$, 
$$
\nu({\rm Fix}(\gamma)) = \langle \gamma 1_\Delta, 1_\Delta \rangle = 0.
$$
Thus, the action $\Gamma \actson (X, \nu)$ is essentially free.
\end{proof}

Special cases of the following consequence of Theorem \ref{rigid} were obtained in a previous work by Ab\'ert-Avni-Wilson \cite{abert}. It was this work that triggered our interest in character rigidity for $SL_2(k)$.

\begin{cor} Let $m \geq 2$ and $k$ be an infinite field. Any non-trivial measure-preserving action of $PSL_m(k)$ on a probability measure space is essentially free.
\end{cor}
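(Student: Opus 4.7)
The plan is to deduce this from Theorem~\ref{essfree}. Since essential freeness passes through the ergodic decomposition, it suffices to consider an ergodic p.m.p.\ action of $\Gamma := \PSL_m(k)$ on a Lebesgue space. To invoke Theorem~\ref{essfree} I must verify that $\Gamma$ is i.c.c.\ and that its only weakly mixing character is $\delta_e$.

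The i.c.c.\ property follows from the classical fact that $\PSL_m(k)$ is simple for $m \geq 2$ and $k$ infinite. Indeed, an infinite simple group admits no proper finite-index subgroup: the intersection of the conjugates of such a subgroup would be a finite-index normal subgroup, hence trivial, which would embed the group into a finite symmetric group. Applied to the centralizer of any non-identity element, this shows every non-trivial conjugacy class is infinite.

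To classify characters on $\Gamma$, I would lift an arbitrary extremal character $\tau$ on $\Gamma$ through the quotient map $q\colon \SL_m(k)\to \Gamma$ to a character $\tilde\tau := \tau\circ q$ on $\SL_m(k)$; by construction, $\tilde\tau|_C \equiv 1$. Central elements act as scalars in any factor representation, so $C$ acts on extremal characters of $\SL_m(k)$ by scalar multiplication; uniqueness of the Choquet decomposition then forces almost every extremal component of $\tilde\tau$ to be trivial on $C$. By Theorem~\ref{rigid} for $m=2$, respectively Kirillov's classification \cite{kir} for $m\ge 3$, each such component is either the constant character or is induced from $C$; the induced ones which equal $1$ on $C$ must be precisely $1_C$. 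Descending back to $\Gamma$, the only extremal characters are the constant character and $\delta_e$, so every character on $\Gamma$ is of the form $\lambda\delta_e + (1-\lambda)\cdot 1$ for some $\lambda\in[0,1]$. The constant summand contributes a trivial one-dimensional subrepresentation to the GNS representation whenever $\lambda < 1$, contradicting weak mixing; hence the only weakly mixing character is $\delta_e$, and Theorem~\ref{essfree} concludes the proof.

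The main technical obstacle will be the bookkeeping in the passage between characters on $\SL_m(k)$ and on $\PSL_m(k)$: one must carefully verify that the Choquet pieces of a $C$-invariant character remain $C$-invariant, which rests on the fact that central elements act as scalars in factor representations.
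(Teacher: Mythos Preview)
Your approach is correct and matches the paper's: both deduce the corollary from Theorem~\ref{essfree} together with the character rigidity results (Theorem~\ref{rigid} for $m=2$, Kirillov \cite{kir} for $m\ge 3$). Your Choquet-decomposition detour is unnecessary, since the lift $\tilde\tau = \tau\circ q$ of an extremal character on $\PSL_m(k)$ is already extremal on $\SL_m(k)$ (the two GNS von Neumann algebras coincide), so Theorem~\ref{rigid}/Kirillov apply to $\tilde\tau$ directly.
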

\begin{proof}
The case $m \geq 3$ follows directly from Kirillov's work \cite{kir} and Theorem \ref{essfree}. In the case $m=2$, it follows from Theorem \ref{essfree} and Theorem \ref{rigid}.
\end{proof}

As consequence of Theorem \ref{secondmain} and Theorem \ref{bekka}, we also obtain analogous results for the groups $SL_m(BS^{-1})$. The cases $m \geq 3$ are already covered by results of Stuck-Zimmer \cite{stuckzimmer}, Nevo-Zimmer \cite{nevozimmer}, and Creutz and the first author \cite{creutzpeterson}.

\begin{cor} Let $m \geq 2$ and let $BS^{-1}$ be a ring as explained in Section \ref{charactersonrings}. If $m=2$, assume that $BS^{-1}$ has infinitely many units. Any non-trivial ergodic measure-preserving action of $PSL_m(BS^{-1})$ on a diffuse probability measure space is essentially free.
\end{cor}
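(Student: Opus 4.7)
The plan is to apply Theorem \ref{essfree} to $\Gamma := PSL_m(BS^{-1})$. This requires verifying two hypotheses: that $\Gamma$ is i.c.c., and that the only weakly mixing character on $\Gamma$ is $\delta_e$. The i.c.c.\ condition is routine since $BS^{-1}$ is an infinite commutative domain sitting inside a number field, and the standard arguments apply.

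The heart of the proof is the classification of weakly mixing characters on $\Gamma$. Every character on $\Gamma$ lifts uniquely to a character on $SL_m(BS^{-1})$ that is identically $1$ on the center $C$, and this lifting shares the GNS representation under the quotient $SL_m \to PSL_m$, so weak mixing is preserved in both directions. I would then invoke Theorem \ref{bekka} when $m \geq 3$, and Theorem \ref{secondmain} when $m=2$ (using the hypothesis that $BS^{-1}$ has infinitely many units), to conclude that every extremal character on $SL_m(BS^{-1})$ either factorizes through a finite quotient or is induced from $C$. Restricting to $C$-trivial extremal characters: those induced from $C$ are forced to equal the Dirac mass on $C$, which descends to $\delta_e$ on $\Gamma$; those factoring through a finite quotient $Q$ are of the form $\chi_\pi$ for some finite-dimensional irreducible representation of $Q$, so have finite-dimensional GNS representation and are therefore not weakly mixing.

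The remaining point is to pass from extremal characters to arbitrary weakly mixing characters on $\Gamma$ via the central decomposition, which I expect to be the main technical subtlety. Since $\Gamma$ is finitely generated it has only countably many finite quotients, and hence only countably many extremal characters of the finite-quotient type above. If the central decomposition of a weakly mixing $\tau$ on $\Gamma$ contained a positive-measure set of such extremal components, one could group by the finite quotient and then by the irrep of the quotient to isolate a direct integral of a fixed finite-dimensional irrep over a positive-measure base, which contains a finite-dimensional $\Gamma$-invariant subspace and thus a finite-dimensional subrepresentation of the GNS representation of $\tau$ --- contradicting weak mixing. Therefore almost every extremal component of $\tau$ must be $\delta_e$, forcing $\tau = \delta_e$. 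The hypothesis of Theorem \ref{essfree} is satisfied and the corollary follows.
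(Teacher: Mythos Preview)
Your approach is correct and matches the paper's intent. The paper does not give an explicit proof of this corollary---it simply states that it follows from Theorems \ref{secondmain}, \ref{bekka}, and \ref{essfree}---and your argument fills in exactly the details one would expect, including the passage from extremal to general weakly mixing characters via the countability of finite quotients (the same device used in the proof of Theorem \ref{secondmain}). One minor caveat: your claim that $\Gamma$ is finitely generated is clear when $S$ is finite but needs care when $S$ is infinite; however, the paper itself handles the infinite-$S$ case by reduction to finite $S$ via inductive limits, so this is not a gap in your strategy.
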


Note that the preceding results can be phrased in terms of invariant random subgroups. If $\mu$ is a conjugation invariant measure on the Chabauty space ${\rm Sub}(\Gamma)$ of subgroups of $\Gamma$ (see \cite{abertIRS} for details), then $\tau_{\mu}(g) := \mu(\{ \Lambda \in {\rm Sub}(\Gamma) \mid g \in \Lambda \})$ is the associated character. Now, the results above just say that every ergodic conjugation invariant probability measure on the space of subgroups, i.e.\ every ergodic invariant random subgroup, of either $PSL_m(k)$ or $PSL_m(BS^{-1})$ is either concentrated on the trivial group or on subgroups of finite index.
Hence, the character rigidity results that we prove are good enough to imply rigidity results for measure-preserving actions on probability spaces, or equivalently, invariant random subgroups. However, in general the character associated with an invariant random subgroup of a group $\Gamma$ does not remember the invariant random subgroup completely. This can be seen already for $\Gamma = (\Z/2\Z)^2$. Indeed, the dimension of the convex set of normalized characters on $\Gamma$ is three, whereas the dimension of the convex set of invariant random subgroups is equal to the number of subgroups minus one and hence equals four. The character associated to an invariant random subgroup remembers only the probability that a particular element is contained in the random subgroup and not the probability that some finite subset of the group is contained in the random subgroup.

Similarly, it is worth noting that there are many characters, even with values in $[0,1]$, which do not come from invariant random subgroups. For example, one can take $\varphi \colon \Z/p\Z \to [0,1]$ with $\varphi(k) := \frac14 |1 + \exp(2\pi i k/p) |^2$, for some prime $p$.

\subsection{Operator algebraic superrigidity}

As a corollary of Theorem \ref{rigid} and the results in \cite{kir}, we obtain also the following operator-algebraic superrigidity result.

\begin{thm} \label{superrigid}
Let $(M,\tau)$ be a finite von Neumann algebra, $m \geq 2$, let $k$ be an infinite field, and let $\varphi \colon PSL_m(k) \to U(M)$ be a homomorphism. Then, there exists a projection $p \in \varphi(PSL_m(k))'$ and a unital, normal, and trace-preserving embedding $\tilde{\varphi} \colon L(PSL_m(k)) \to pMp$, so that 
$$\varphi(g) = \tilde{\varphi}(\lambda(g)) + p^{\perp}, \quad \forall g \in PSL_m(k).$$
\end{thm}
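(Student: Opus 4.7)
The plan is to interpret $\varphi$ as producing a character on $PSL_m(k)$ via the trace on $M$, apply character rigidity to force that character to be a convex combination of at most two extremal characters, and then read off the desired decomposition from the central decomposition of the generated von Neumann algebra.

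First, set $N := \varphi(PSL_m(k))'' \subseteq M$ and define $\tau_\varphi \colon PSL_m(k) \to \C$ by $\tau_\varphi(g) := \tau(\varphi(g))$; this is a character of $PSL_m(k)$. Since $k$ is infinite and $m \geq 2$, the group $PSL_m(k)$ is simple (hence has no non-trivial finite quotients) and centerless, as the center $C$ of $SL_m(k)$ is exactly the kernel of $SL_m(k) \twoheadrightarrow PSL_m(k)$. Combining this with Theorem \ref{rigid} for $m=2$ and Kirillov \cite{kir} for $m \geq 3$, the only extremal characters of $SL_m(k)$ that descend to $PSL_m(k)$ are the constant character $1$ and the regular character $\delta_e$: any non-constant extremal character of $SL_m(k)$ is either factored through a finite quotient (impossible by simplicity) or induced from $C$, and an induced character descends to $PSL_m(k)$ exactly when it takes value $1$ on $C$, yielding $\delta_e$. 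By the Choquet decomposition we therefore have $\tau_\varphi = \alpha \cdot 1 + (1-\alpha)\cdot \delta_e$ for some $\alpha \in [0,1]$.

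Next, I would transfer this character decomposition to a central decomposition of $(N,\tau|_N)$. Since $PSL_m(k)$ is i.c.c.\ (being infinite, simple, and centerless), $L(PSL_m(k))$ is a II$_1$ factor, and uniqueness of the GNS construction applied to $\tau_\varphi$ yields a trace-preserving $*$-isomorphism
\[
N \;\cong\; \C \,\oplus\, L(PSL_m(k)), \qquad \varphi(g) \longmapsto (1, \lambda(g)),
\]
where the trace on the right carries mass $\alpha$ on the first summand and $(1-\alpha)\tau_L$ on the second. Let $p \in Z(N) \subseteq \varphi(PSL_m(k))'$ be the central projection onto the $L(PSL_m(k))$ summand, so that $\tau(p)=1-\alpha$, $\tau(p^\perp)=\alpha$, and $\varphi(g) p^\perp = p^\perp$ for every $g$. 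The map $\tilde{\varphi} \colon L(PSL_m(k)) \to pNp \subseteq pMp$ sending $\lambda(g) \mapsto \varphi(g) p$ is then a unital, normal, trace-preserving embedding (trace-preserving in the sense that the normalized trace on $pMp$ pulls back to $\tau_L$).

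Finally, $\varphi(g) = \varphi(g) p + \varphi(g) p^\perp = \tilde{\varphi}(\lambda(g)) + p^\perp$, which is the required decomposition. The only technical point is the passage from the Choquet decomposition of $\tau_\varphi$ to the corresponding central decomposition of $N$, but this is a standard consequence of the uniqueness of GNS together with the factoriality of $L(PSL_m(k))$ guaranteed by the i.c.c.\ property. No significant new obstacle arises, since the substantive input—character rigidity—is already in hand.
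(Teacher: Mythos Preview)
Your argument is correct and follows essentially the same route as the paper: form the character $g \mapsto \tau(\varphi(g))$, invoke character rigidity (Theorem~\ref{rigid} and Kirillov) to see that the only extremal characters on $PSL_m(k)$ are the trivial and regular ones, and translate the resulting two-point decomposition into a central splitting of $\varphi(PSL_m(k))''$ with projection $p$. You have in fact spelled out more of the standard details (i.c.c., descent to $PSL_m(k)$, normalization of the trace on $pMp$) than the paper does.
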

\begin{proof}
We may assume that $M = \varphi(PSL_m(k))''$. Note that $g \mapsto \tau(\varphi(g))$ is a character on $PSL_m(k)$. Its decomposition into extremal characters gives rise to a direct integral decomposition of $M$ into factors. By Theorem \ref{rigid} (or the results in \cite{kir}), the only extremal characters are the trivial and the regular character. This implies that the direct integral decomposition has at most two summands, one isomorphic to $\C$ and the other isomorphic to $L(PSL_m(k))$. This finishes the proof.
\end{proof}

For special linear groups over rings $BS^{-1}$, we obtain a more refined situation.

\begin{thm} Let $m \geq 2$ and let $BS^{-1}$ be a ring as explained in Section \ref{charactersonrings}. If $m=2$, assume that $BS^{-1}$ has infinitely many units. Let $M$ be a {\rm II}$_1$-factor and set $\Gamma := PSL_m(BS^{-1})$. Let $\varphi \colon \Gamma \to U(M)$ be a homomorphism. Then, there exists a projection $p \in M$, such that $p \in \varphi(\Gamma)'$ and a trace-preserving $*$-homomorphism $$\psi \colon L\Gamma \to pMp,$$ such that $p^{\perp} \varphi(\Gamma) \subset U(M)$ is pre-compact and
$\varphi(g) = \psi(g) + p^{\perp} \varphi(g),$ for all $g \in \Gamma.$
In particular, if $\varphi(\Gamma)''=M$, then $\lambda(g) \mapsto \varphi(g)$ extends to an isomorphism $L\Gamma \cong M$.
\end{thm}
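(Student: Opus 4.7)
The plan is to transfer character rigidity for $SL_m(BS^{-1})$ to a structural statement about $N := \varphi(\Gamma)'' \subset M$ via the character $\chi(g) := \tau_M(\varphi(g))$ on $\Gamma := PSL_m(BS^{-1})$, where $\tau_M$ is the unique trace on $M$. First I note that any extremal character on $\Gamma$ lifts to an extremal character on $SL_m(BS^{-1})$ that is trivial on the centre $C$; by Theorem~\ref{secondmain} (for $m=2$) and Theorem~\ref{bekka} (for $m \geq 3$) such a lift either factors through a finite quotient of $SL_m(BS^{-1})$, or is supported on $C$, in which case it descends to the regular character $\delta_e$ on $\Gamma$. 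Hence every extremal character on $\Gamma$ is either $\delta_e$ or factors through a finite quotient of $\Gamma$.

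Next I would take the central decomposition of $(N, \tau_M|_N)$ and let $p \in Z(N) \subset \varphi(\Gamma)'$ be the projection collecting the ``regular character'' part of $\chi$. On $pNp$ the restricted trace is a scalar multiple of the regular character on $\Gamma$, so the assignment $\lambda(g) \mapsto p\varphi(g)$ defines a trace-preserving normal $*$-homomorphism $\psi \colon L\Gamma \to pMp$. Injectivity is automatic: $\Gamma$ is i.c.c.\ so $L\Gamma$ is a II$_1$-factor, and a normal $*$-homomorphism out of a factor is either zero or injective.

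Then I would handle the complementary piece. The central decomposition of $p^\perp N p^\perp$ picks up only characters factoring through finite quotients of $\Gamma$, so $p^\perp N p^\perp$ is of type I. Since $\Gamma$ is finitely generated it has only countably many finite quotients, so after regrouping the integral decomposition becomes a countable direct sum $p^\perp N p^\perp = \bigoplus_n N_n$ of finite type I factors, with the restriction of $\varphi$ to each $N_n$ factoring through a finite quotient $\Gamma/K_n$. Consequently the combined restriction of $\varphi$ factors through the continuous homomorphism $\Gamma \to \prod_n \Gamma/K_n$ into a compact profinite group, and a standard dominated-convergence argument shows that the induced map into $U(p^\perp M p^\perp)$ is $2$-norm continuous. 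Therefore $p^\perp \varphi(\Gamma)$ lies inside the image of a compact set and is pre-compact. Since $p \in \varphi(\Gamma)'$ the decomposition $\varphi(g) = \psi(\lambda(g)) + p^\perp \varphi(g)$ is then immediate.

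For the ``in particular'' assertion, if $\varphi(\Gamma)'' = M$ then $Z(N) = Z(M) = \mathbb C$, forcing $p \in \{0, 1\}$; the case $p = 0$ is excluded because it would make $M$ of type I, contradicting the II$_1$-factor hypothesis. Therefore $p = 1$ and $\psi \colon L\Gamma \to M$ is the asserted isomorphism. I expect the most delicate point to be rigorously extracting the countable direct sum decomposition of $p^\perp N p^\perp$ from its abstract central decomposition, which requires combining the countability of the set of finite quotients of $\Gamma$ with the uniqueness of the Thoma--Choquet decomposition of $\chi$ on the type-I side.
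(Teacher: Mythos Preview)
Your proposal is correct and follows the same strategy the paper has in mind: form the character $g\mapsto\tau_M(\varphi(g))$, invoke Theorems~\ref{secondmain} and~\ref{bekka} to classify its extremal pieces, and split $N=\varphi(\Gamma)''$ accordingly via its central decomposition, exactly as in the proof of Theorem~\ref{superrigid}. The paper's own proof is a single sentence deferring to those results, so you have in fact written out what the paper leaves implicit; in particular your treatment of the pre-compactness of $p^\perp\varphi(\Gamma)$ (countably many finite quotients, profinite factorisation, dominated convergence for the $2$-norm) supplies the detail that is genuinely needed beyond the field case of Theorem~\ref{superrigid}, where only the trivial character appears on the complementary piece.
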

\begin{proof}
This is immediate from Theorem \ref{secondmain} and Theorem \ref{bekka} and the arguments in the proof of Theorem \ref{superrigid}.
\end{proof}

\section{Almost representations}
\label{almostrep}
In 1940, von Neumann and Wigner \cite{wigneu} showed that the group $SL_2(\mathbb Q)$ does not admit any non-trivial finite-dimensional unitary representations, (see also \cite{dye}, p.\ 128). We want to prove this result for arbitrary infinite fields and more importantly extend to almost representations into II$_1$-factors with the Haagerup property.

\begin{defn}Let $N$ be a finite factor. A map $\varphi \colon \Gamma \to U(N)$ is called a $\delta$-almost homomorphism if $\|\varphi(gh)-\varphi(g)\varphi(h)\|_2 \leq \delta$ for all $g,h \in \Gamma$.
\end{defn}

Note that this definition is different from the definition in \cite{thom3}, where the operator norm was used to measure the deviation from being a homomorphism. We say that a map $\pi \colon \Gamma \to U(N)$ is $\varepsilon$-trivial on a subset $X \subset \Gamma$, if $\|\pi(g)-1\|_2 < \varepsilon$ for all $g \in X$. Similarly, we say that two maps $\pi_1,\pi_2 \colon \Gamma \to U(N)$ are $\varepsilon$-close on $X$ if $\|\pi_1(g) - \pi_2(g)\|< \varepsilon$ for all $g \in X$.

In order to prove our next theorem, we need some preparation.
Let $m \geq 2$ be an integer. Let $(k_n)_n$ be a sequence of field with $\lim_{n \to \infty} |k_n| = \infty$, let $N_n$ be a sequence of II$_1$-factors, and let $\varphi_n \colon PSL_m(k_n) \to U(N_n)$ be a sequence of $\delta_n$-homomorphisms with $\lim_{n \to \infty} \delta_n =0$. Let $\omega \in \beta \N$ be a non-principal ultrafilter. We denote by $N^{\omega}$ the von Neumann algebraic ultraproduct of $(N_n)_n$ and by $k_{\omega}$ the usual ultraproduct of fields $(k_n)_n$. Our assumptions imply that $k_{\omega}$ is infinite. We get an induced
homomorphism
$$\varphi_{\omega} \colon PSL_m(k_{\omega}) \to N^{\omega}.$$
By Theorem \ref{superrigid}, we obtain a projection $p \in N^{\omega}$ and a trace-preserving embedding $\psi \colon L(PSL_m(k_{\omega})) \to pN^{\omega} p$, such that
\begin{equation} \label{eqstr}
\varphi_{\omega}(g) = \psi(g) + p^{\perp}, \quad \forall g \in PSL_m(k_{\omega}).
\end{equation}

\begin{thm} \label{almost}
For all $m \geq 2$, $\varepsilon>0$ there exists $\delta>0$ and $n(\varepsilon)\in \N$ such that for every field $k$ with $|k| \geq n(\varepsilon)$, every {\rm II}$_1$-factor $(N,\tau)$, and every $\delta$-homomorphism $\varphi \colon PSL_m(k) \to U(N)$ the following holds. 
There exists a projection $p \in N$ such that $\|\varphi(g)p^{\perp}-p^{\perp}\|_2 \leq \varepsilon$ and
 $|\tau(\varphi(g)p)| \leq \varepsilon$  for all $g \in PSL_m(k) \setminus \{ e \}$.
\end{thm}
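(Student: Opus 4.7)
The plan is to proceed by contradiction, capitalizing on the ultraproduct setup built immediately before the theorem statement. Suppose the conclusion fails for some $\varepsilon > 0$. Then for every $n \in \N$ one finds a field $k_n$ with $|k_n|\geq n$, a {\rm II}$_1$-factor $(N_n,\tau_n)$, and a $(1/n)$-almost homomorphism $\varphi_n\colon PSL_m(k_n) \to U(N_n)$ such that no projection $p \in N_n$ simultaneously satisfies $\|\varphi_n(g)p^{\perp} - p^{\perp}\|_2 \leq \varepsilon$ and $|\tau_n(\varphi_n(g)p)| \leq \varepsilon$ for every $g \in PSL_m(k_n) \setminus \{e\}$. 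This sequence places us in precisely the setting displayed just before the theorem.

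Fix a free ultrafilter $\omega$ on $\N$ and form $N^{\omega}$, $k_{\omega}$, and the induced group homomorphism $\varphi_{\omega}\colon PSL_m(k_{\omega}) \to U(N^{\omega})$. Since $k_{\omega}$ is infinite, Theorem \ref{superrigid} furnishes a projection $p \in N^{\omega}$ together with a trace-preserving embedding $\psi\colon L(PSL_m(k_{\omega})) \to pN^{\omega}p$ obeying the identity \eqref{eqstr}. Lift $p$ to a sequence of projections $(p_n)$ with $p_n \in N_n$. For each $n$, our standing hypothesis yields some $g_n \in PSL_m(k_n)\setminus \{e\}$ violating at least one of the two required inequalities; since $\omega$ is an ultrafilter, after passing to an $\omega$-large set we may assume that the \emph{same} inequality is violated for every $n$ in the set. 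Identifying $\prod_{\omega} PSL_m(k_n)$ with $PSL_m(k_{\omega})$ by elementary equivalence of the uniformly first-order defined varieties $SL_m$ (the center being of uniformly bounded order at most $m$), the class $g_{\omega} := [(g_n)]$ is a non-identity element of $PSL_m(k_{\omega})$.

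On the other hand, the identity $\varphi_{\omega}(g_{\omega}) = \psi(g_{\omega}) + p^{\perp}$ immediately gives $\varphi_{\omega}(g_{\omega}) p^{\perp} = p^{\perp}$, and
$$\tau^{\omega}(\varphi_{\omega}(g_{\omega}) p) = \tau^{\omega}(\psi(g_{\omega})) = \tau_{L(PSL_m(k_{\omega}))}(\lambda(g_{\omega})) = 0,$$
the last equality holding because $PSL_m(k_{\omega})$ is an i.c.c.\ group for $m \geq 2$ and $k_{\omega}$ infinite (it is simple and infinite), so that $\lambda(h)$ has canonical trace $0$ for any $h \neq e$. Continuity of $\|\cdot\|_2$ and of the trace under the tracial ultraproduct then forces
$$\lim_{n\to\omega}\|\varphi_n(g_n)p_n^{\perp} - p_n^{\perp}\|_2 = 0 \quad\text{and}\quad \lim_{n\to\omega}|\tau_n(\varphi_n(g_n)p_n)| = 0,$$
contradicting the $\varepsilon$-violation on our $\omega$-large set.

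The whole argument is a standard ultraproduct compactness/diagonalization, and the substantive content is entirely absorbed in Theorem \ref{superrigid}; no fresh rigidity input is needed. The one place where a little care is required is the identification $\prod_{\omega} PSL_m(k_n) = PSL_m(k_{\omega})$ and the lifting of the projection $p \in N^{\omega}$ to a sequence of projections in the $N_n$. Once these routine points are granted, the control on the $(g_n,p_n)$ level follows solely from continuity of $\|\cdot\|_2$ and the trace under the tracial ultraproduct.
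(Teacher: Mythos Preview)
Your proof is correct and follows essentially the same ultraproduct-by-contradiction argument as the paper's own proof. You are in fact somewhat more careful than the paper: you make explicit the identification $\prod_{\omega} PSL_m(k_n) \cong PSL_m(k_{\omega})$, you justify why the limiting element $g_{\omega}$ is non-trivial, and you spell out why $\tau^{\omega}(\varphi_{\omega}(g_{\omega})p)=0$ via the i.c.c.\ property, all of which the paper leaves implicit.
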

\begin{proof} We argue by contradiction. Suppose that for some $\varepsilon>0$, the assertion fails for $\delta=1/n$ and $n \in \N$, for all choices of $n \in \N$. Then, there exists a sequence of $1/n$-homomorphisms $\varphi_n \colon PSL_m(k_n)  \to U(N_n)$ with $|k_n| \geq n$, such that for all projections $q_n \in N_n$, there exist group elements $g_n \in PSL_m(k_n)$ such that either $\|g_nq_n^{\perp}-q_n^{\perp}\|_2 > \varepsilon$ or $|\tau(g_nq_n)| > \varepsilon$. Now, we can use the structure of $\varphi_{\omega}$ as described in Equation \eqref{eqstr}. The projection $p \in N^{\omega}$ is represented by a sequence of projections $(p_n)_{n}$. Let $g_n$ be the corresponding element of $PSL_m(k_n)$.
Clearly, $(g_n)_n$ defines an element $g \in PSL_m(k_{\omega})$. As $n \to \omega$, we obtain either $|\tau(\varphi_{\omega}(g)p)| \geq \varepsilon$ or $\|\varphi_{\omega}(g)p^{\perp}-p^{\perp}\|_2 \geq \varepsilon$. This is a contradiction and finishes the proof.
\end{proof}

We obtain the following corollary.

\begin{cor} \label{almostcor}
For all $m \geq 2$, $\varepsilon>0$ there exists $\delta>0$ such that the following holds. For every infinite field $k$, $n \in \N$ and every $\delta$-homomorphism $\varphi \colon PSL_m(k) \to U(n)$, we have $\|1-\varphi(g)\|_2 \leq \varepsilon$ for all $g \in PSL_m(k)$.
\end{cor}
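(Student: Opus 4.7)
My plan is to reduce the finite-dimensional target to the II$_1$-factor setting of Theorem \ref{almost} by tensoring with the hyperfinite II$_1$-factor $\RR$, and then to convert the output of that theorem into the desired bound on $\|1-\varphi(g)\|_2$. Fix $\varepsilon>0$, choose an auxiliary $\varepsilon'>0$ (say $\varepsilon'=\varepsilon^2/8$), and let $\delta$ be the constant supplied by Theorem \ref{almost} for tolerance $\varepsilon'$. Given a $\delta$-almost homomorphism $\varphi\colon PSL_m(k)\to U(n)$, I form the II$_1$-factor $N:=M_n(\C)\otimes\RR$ and consider $\tilde\varphi(g):=\varphi(g)\otimes 1\in U(N)$. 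Since the tensor product preserves the $2$-norm and the trace on $N$ restricts to the normalized trace on $M_n\otimes 1$, the map $\tilde\varphi$ is still a $\delta$-almost homomorphism and $\|1-\varphi(g)\|_2=\|1-\tilde\varphi(g)\|_2$. Applying Theorem \ref{almost} then produces a projection $p\in N$ with $\|\tilde\varphi(g)p^{\perp}-p^{\perp}\|_2\leq\varepsilon'$ and $|\tau(\tilde\varphi(g)p)|\leq\varepsilon'$ for all $g\neq e$.

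Next, I would expand $\|1-\tilde\varphi(g)\|_2^2=2-2\,\mathrm{Re}\,\tau(\tilde\varphi(g))$, using unitarity of $\tilde\varphi(g)$, and split $\tau(\tilde\varphi(g))=\tau(\tilde\varphi(g)p)+\tau(\tilde\varphi(g)p^{\perp})$. The first summand is $O(\varepsilon')$ by the theorem, and the standard bound $|\tau(x)|\leq\|x\|_2$ in a finite tracial factor yields $|\tau(\tilde\varphi(g)p^{\perp})-\tau(p^{\perp})|=|\tau((\tilde\varphi(g)-1)p^{\perp})|\leq\|\tilde\varphi(g)p^{\perp}-p^{\perp}\|_2\leq\varepsilon'$. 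Combining these estimates gives $\|1-\varphi(g)\|_2^2\leq 2\tau(p)+4\varepsilon'$ for every $g\neq e$. It therefore suffices to ensure that $\tau(p)$ can be forced to be small, say $\tau(p)\leq\varepsilon^2/4$, which together with the choice of $\varepsilon'$ yields $\|1-\varphi(g)\|_2\leq\varepsilon$.

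For this final step I would argue by an ultraproduct contradiction, in the same spirit as the proof of Theorem \ref{almost}. If no uniform bound on $\tau(p)$ existed, one could extract counterexamples $\varphi_j\colon PSL_m(k_j)\to U(n_j)$ with $\delta_j\to 0$ whose ultraproduct defines a genuine homomorphism $\tilde\varphi_{\omega}\colon PSL_m(k_{\omega})\to U(N^{\omega})$ with image in the sub-II$_1$-factor $(\prod^{\omega}M_{n_j})\otimes 1\subset N^{\omega}$, together with a projection $p=[p_j]_{\omega}$ furnished by Theorem \ref{superrigid} whose trace is bounded below. When $(n_j)$ is bounded along the ultrafilter, the relevant corner is finite-dimensional and cannot contain the infinite-dimensional II$_1$-factor $L(PSL_m(k_{\omega}))$, so the trace-preserving embedding $\psi$ of Theorem \ref{superrigid} forces $p=0$, a contradiction. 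The genuinely delicate case, which I expect to be the main obstacle, is $n_j\to\infty$: then the corner in question is itself a II$_1$-factor, and one has to rule out any trace-preserving embedding of $L(PSL_m(k_{\omega}))$ into a corner of the ultraproduct of matrix algebras. For $m\geq 3$ I would invoke property (T) of $L(PSL_m(k_{\omega}))$, which obstructs embeddings into the Haagerup-type ultrapower of finite-dimensional factors; for $m=2$ a more specialized argument is required, presumably using the weak mixing of Proposition \ref{weakmix} together with the rigidity mechanism exploited in the proof of Theorem \ref{rigid}.
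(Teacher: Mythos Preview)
Your first two paragraphs are fine: tensoring with $\RR$ lands you in a ${\rm II}_1$-factor, Theorem~\ref{almost} applies, and the identity $\|1-\varphi(g)\|_2^2 \leq 2\tau(p) + 4\varepsilon'$ correctly reduces everything to bounding $\tau(p)$.

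The gap is in your third paragraph. Your plan for $m\geq 3$ is to use property (T) of $L(PSL_m(k_\omega))$ to obstruct a trace-preserving embedding into a corner of $\prod^\omega M_{n_j}\cong R^\omega$. This does not work: property (T) factors embed into $R^\omega$ all the time. For instance $SL_3(\Z)$ is residually finite, hence hyperlinear, so $L(SL_3(\Z))$---which has property (T)---embeds trace-preservingly into $R^\omega$. The ultrapower $R^\omega$ is not a Haagerup factor in any sense that would make the (T)/Haagerup incompatibility you have in mind applicable here. Your plan for $m=2$ is left vague, and I do not see how the ingredients you name would rule out such an embedding either.

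The missing observation is far more elementary and uses only that $U(n)$ is compact while $PSL_m(k)$ is infinite. Choose distinct $g,h\in PSL_m(k)$ with $\|\varphi(g)-\varphi(h)\|_2$ as small as you like; then $g_0:=g^{-1}h\neq e$ satisfies $\|\varphi(g_0)-1\|_2<\eta+O(\delta)$ for arbitrarily small $\eta>0$. Feeding this single element into your own trace estimate $|\tau(\varphi(g_0))-(1-\tau(p))|\leq 2\varepsilon'$, together with $|\tau(\varphi(g_0))-1|\leq\|\varphi(g_0)-1\|_2$, gives $\tau(p)\leq 2\varepsilon'+\eta+O(\delta)$; letting $\eta\to 0$ yields $\tau(p)\leq 2\varepsilon'+O(\delta)$, which is exactly what you needed. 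Alternatively one can bypass Theorem~\ref{almost} entirely and pass directly from $\varphi(g_0)\approx 1$ to $\varphi(g)\approx 1$ for all $g$, using the bounded-generation fact recorded in the Remark after Theorem~\ref{almostk} (every element of $PSL_m(k)$ is a product of boundedly many conjugates of any fixed nontrivial element). The paper gives no explicit proof of the corollary; this compactness step is the natural bridge, and no ultraproduct or rigidity argument is required for the final move.
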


This result should be compared with some stronger results in \cite{thom3} which apply to groups $SL_n(BS^{-1})$ for $n \geq 3$ and measure almost homomorphisms in the operator norm instead of the 2-norm. The results from \cite{thom3} were extended by Gamm \cite{gamm} to cover also the case $SL_2(BS^{-1})$, when $BS^{-1}$ contains infinitely many units. Thus, the main novelty of Theorem \ref{almost} is not Corollary \ref{almostcor} but that it also applies to almost homomorphisms into an arbitrary II$_1$-factor. 

It would be interesting to decide, whether every almost homomorphism of $PSL_m(k)$ into a II$_1$-factor is either almost trivial or close to an embedding of the associated group von Neumann algebra. We can completely understand the situation in case when the II$_1$-factor is amenable or more generally, if the factor has the Haagerup property.

When $k$ is an algebraic extension of a finite field then $PSL_m(k)$ is locally finite and hence embeds into the unitary group of the amenable {\rm II}$_1$-factor $R$. On the other hand, if $k$ is not an algebraic extension of a finite field then $PSL_m(k)$ is non-amenable and so by Theorem~\ref{superrigid} it follows that any homomorphism into $U(R)$ must be the trivial homomorphism. Here we show that the conclusion of Corollary~\ref{almostcor} also holds for $\delta$-almost representations $U(N)$, when $N$ has the Haagerup property. For this we will use Property (T) for a subgroup to insure an indiscrete imbedding (cf.\ \cite{connesjones, robertson}), and hence will be valid for $m \geq 3$.
Recall the following definition from \cite[Section 3]{ozawa}.

\begin{defn} \label{tq}
A group $\Gamma$ has property $\rm T_Q$ if the following holds. For every $\varepsilon>0$, there exists $\delta>0$ and a finite subset $K \subset \Gamma$, such that for every map $\pi \colon \Gamma \to U(\HH)$ and any vector $\xi \in \HH$ such that
$$\sup \{ \|\pi(gh)\xi-\pi(g)\pi(h)\xi\| \mid g,h \in \Gamma \}< \delta, \quad \mbox{and} \quad \sup \{ \|\pi(g)\xi - \xi\| \mid g \in K \} < \delta,$$
we get
$$
\sup \{ \|\pi(g)\xi - \xi\| \mid g \in \Gamma \} < \varepsilon.$$
\end{defn}

Ozawa showed in \cite[Theorem B]{ozawa} that lattices in $PSL_m(\mathbb K)$ have Property ${\rm T_Q}$ for $m \geq 3$ and any local field $\mathbb K$, as for instance $\R$ or $\mathbb F((t))$. In particular, $PSL_m(\Z)$ and $PSL_m({\mathbb F}_p[t,t^{-1}])$ have property ${\rm T_Q}$ for every $m \geq 3$.

We also recall the Haagerup property \cite{choda} for von Neumann algebras as introduced by Connes.

\begin{defn} \label{h}
A finite factor $N$ has the Haagerup property if there exists a net of unital trace-preserving completely positive maps $\phi_\alpha \colon N \to N$, such that $\| \phi_\alpha(x) - x \|_2 \to 0$, for all $x \in N$, and each $\phi_\alpha$ is compact when viewed as an operator on $L^2(N, \tau)$.
\end{defn}

Examples of finite factors with the Haagerup property include the amenable {\rm II}$_1$-factor, free group factors \cite{haagerup}, and more generally any group von Neumann algebra of a discrete i.c.c.\ group with the Haagerup property \cite{choda}. 

\begin{prop}\label{tqhaagerup}
Suppose $\Gamma$ is a discrete group with property ${\rm T_Q}$, then for each $\varepsilon > 0$ there exists $\delta > 0$ such that if $\pi\colon \Gamma \to U(N)$ is a $\delta$-homomorphism into a finite factor $N$ with the Haagerup property, then there exists a finite subset $F \subset \Gamma$ such that $F \cdot \{ g \in \Gamma \mid \| \pi(g) - 1 \|_2 < \varepsilon \} = \Gamma$.
\end{prop}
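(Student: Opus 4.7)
The plan is to show the equivalent (and slightly stronger-sounding) statement that $\pi(\Gamma)$ is pre-compact in $(N,\|\cdot\|_2)$. Once this is established, choosing $\pi(g_1),\dots,\pi(g_n)$ as an $(\varepsilon/3)$-net and setting $F:=\{g_1,\dots,g_n\}$ gives the conclusion, since $\|\pi(g_i^{-1}g)-1\|_2$ is controlled by $\|\pi(g)-\pi(g_i)\|_2$ together with the $O(\delta)$-errors from almost-multiplicativity (using also that left-multiplication by a unitary is a $\|\cdot\|_2$-isometry). The key is to convert the pointwise Haagerup approximation into a uniform approximation along the orbit $\pi(\Gamma)$ by means of Property $\mathrm{T_Q}$.

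Concretely, I would first apply Definition \ref{tq} with target $\varepsilon'\ll\varepsilon$, obtaining $\delta_0>0$ and a finite $K\subset\Gamma$; then fix $\delta\ll\delta_0$. Given a $\delta$-homomorphism $\pi\colon\Gamma\to U(N)$, the Haagerup property (Definition \ref{h}) furnishes a unital trace-preserving completely positive map $\phi\colon N\to N$ that is compact on $L^2(N,\tau)$ and satisfies $\|\phi(\pi(k))-\pi(k)\|_2<\eta$ for all $k\in K\cup\{e\}$, where $\eta$ is chosen so that $\sqrt{\eta+\delta}\ll\delta_0$. Next I would form the $N$-$N$-correspondence $H_\phi$ with its canonical bimodule vector $\xi_\phi$, and define $\Pi\colon\Gamma\to U(H_\phi)$ by $\Pi(g)\xi:=\pi(g)\xi\pi(g^{-1})$; each $\Pi(g)$ is unitary because $\pi(g)$ and $\pi(g^{-1})$ are. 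Since $\phi$ is trace-preserving, one has $\|a\xi_\phi\|=\|\xi_\phi a\|=\|a\|_2$ for every $a\in N$, which immediately yields the almost-multiplicativity bound $\|\Pi(gh)\xi_\phi-\Pi(g)\Pi(h)\xi_\phi\|\le 2\delta$. Using the identity $\|a\xi_\phi-\xi_\phi a\|^2=2\|a\|_2^2-2\,\mathrm{Re}\,\tau(a^*\phi(a))$ with $a=\pi(k^{-1})$, combined with $\|\pi(k)\pi(k^{-1})-1\|_2\le\delta$ and the choice of $\phi$, one obtains $\|\Pi(k)\xi_\phi-\xi_\phi\|=O(\sqrt{\eta+\delta})<\delta_0$ for every $k\in K$.

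Property $\mathrm{T_Q}$ now yields $\|\Pi(g)\xi_\phi-\xi_\phi\|<\varepsilon'$ for every $g\in\Gamma$. Expanding the squared norm gives $\mathrm{Re}\,\tau(\phi(\pi(g))\pi(g^{-1}))>1-\varepsilon'^2/2$, and combining this with the Kadison--Schwarz contraction $\|\phi(\pi(g))\|_2\le 1$ and the bound $\|\pi(g)^{-1}-\pi(g^{-1})\|_2\le\delta$ produces the \emph{uniform} approximation $\|\phi(\pi(g))-\pi(g)\|_2<C\varepsilon'$ for every $g\in\Gamma$. Because $\phi$ is compact on $L^2$ and $\pi(\Gamma)$ is contained in the $L^2$-unit ball, the set $\{\phi(\pi(g)):g\in\Gamma\}$ is pre-compact, and the uniform bound transfers pre-compactness to $\pi(\Gamma)$ itself, from which $F$ is extracted. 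The main obstacle I foresee is the quantitative bookkeeping: the $K$-invariance estimate naturally comes out as $O(\sqrt{\eta+\delta})$ rather than $O(\eta+\delta)$, forcing $\eta$ and $\delta$ to be taken much smaller than $\delta_0^2$. Conceptually the crux is that passing to the $N$-$N$-correspondence $H_\phi$ converts the operator-algebraic compactness of $\phi$ into the Hilbert-space setting where $\mathrm{T_Q}$ applies, with the trace-preserving property of $\phi$ ensuring that $\|\cdot\|_2$-estimates on $N$ translate faithfully into estimates on $H_\phi$.
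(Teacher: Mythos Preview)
Your proposal is correct and follows essentially the same route as the paper: form the pointed $N$-bimodule associated to a Haagerup approximant $\phi$, apply $\mathrm{T_Q}$ to the conjugation quasi-action on that bimodule to upgrade pointwise $L^2$-approximation by $\phi$ to uniform approximation along $\pi(\Gamma)$, and then invoke compactness of $\phi$ to extract a finite net. The only cosmetic differences are that the paper first normalizes $\pi(g^{-1})=\pi(g)^*$ (eliminating your extra $\delta$-terms) and chooses $\phi_\alpha$ directly so that $\|\sigma_\alpha(k)\xi_\alpha-\xi_\alpha\|<\delta$ on $K$, which sidesteps the square-root bookkeeping you flag.
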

\begin{proof}
Let $\varepsilon > 0$ be given and take $\delta > 0$ with $\delta < \varepsilon/ 3$ and $K \subset \Gamma$ as in Definition \ref{tq} for $\Gamma$, for $\varepsilon/ 3$. Let $\pi: \Gamma \to U(N)$ be a $\frac{\delta}{2}$-homomorphism and consider a net $\{ \phi_\alpha \}$ as in Definition \ref{h}. Without loss of generality, we may assume that $\pi(g^{-1}) = \pi(g)^*$, for all $g \in \Gamma$.

To each $\phi_\alpha$ there is an associated pointed $N$-bimodule $(\mathcal H_\alpha, \xi_\alpha)$ such that $\tau(\phi_\alpha(x) y) = \langle x \xi_\alpha y, \xi_\alpha \rangle$ for all $x, y \in N$ (see \cite{connescorrespondences}, \cite{popacorrespondences}, or Section 1 of \cite{popa}). We may then define a map $\sigma_\alpha \colon \Gamma \to U(\mathcal H_\alpha)$ by setting 
$$\sigma_\alpha(g) \xi := \pi(g) \xi \pi(g)^*, \quad \forall g \in \Gamma, \ \forall \xi \in \mathcal H_\alpha.$$ 

Note that for all $g, h \in \Gamma$ we have
$$
\|\sigma_\alpha(gh)\xi_\alpha - \sigma_\alpha(g) \sigma_\alpha(h)\xi_\alpha \|
\leq 2 \| \pi(gh) - \pi(g) \pi(h) \|_2
<  \delta.
$$

For each $g \in \Gamma$ we have $\| \sigma_\alpha(g) \xi_\alpha - \xi_\alpha \|^2 = 2 \Re (1 - \tau( \phi_\alpha(\pi(g)) \pi(g)^* ) ) \to 0$, hence for some fixed $\alpha$ we have $\| \sigma_\alpha(g) \xi_\alpha - \xi_\alpha \| < \delta$ for all $g \in K$. It follows from property ${\rm T_Q}$ that for this $\alpha$ we have $$\sup_{g \in \Gamma} \| \sigma_\alpha(g) \xi_\alpha - \xi_\alpha \| < \varepsilon/3.$$

Thus, using Kadison's inequality $\phi_\alpha(\pi(g)^*) \phi_\alpha(\pi(g)) \leq \phi_\alpha(\pi(g)^*\pi(g)) = 1$ we conclude 
$$
\sup_{g \in \Gamma} \| \phi_\alpha(\pi(g)) - \pi(g) \|_2^2 
\leq 2  \sup_{g \in \Gamma}\Re ( 1 - \tau( \phi_\alpha(\pi(g)) \pi(g)^*) ) 
= \sup_{g \in \Gamma}\| \sigma_\alpha(g) \xi_\alpha - \xi_\alpha \|^2 
< (\varepsilon/3)^2.
$$ 
As $\phi_\alpha$ is a compact operator on $L^2(N, \tau)$, we have $\phi_\alpha(\pi(\Gamma))$ is pre-compact in $L^2(N, \tau)$ and hence there is a finite subset $F \subset \Gamma$ such that $\inf_{h \in F} \| \pi(g) - \pi(h) \|_2 < 2\varepsilon/3$ for all $g \in \Gamma$, and hence $F \cdot \{ g \in \Gamma \mid \| \pi(g) - 1 \|_2 < \delta + 2 \varepsilon/3 \} = \Gamma$.
\end{proof}

\begin{thm} \label{almostk}
Let $k$ be a field which is not an algebraic extension of a finite field, and $m \geq 3$. For all $\varepsilon > 0$ there exists $\delta > 0$ such that if $\pi \colon PSL_m(k) \to U(N)$ is a $\delta$-almost homomorphism % i.e.\$$\sup \{ \|\pi(gh)- \pi(g)\pi(h)\|_2 \mid g,h \in PSL_m(k) \}< \delta.$$ 
into a finite factor $N$ with the Haagerup property then $\| \pi(g) - 1 \|_2 < \varepsilon$ for all $g \in PSL_m(k)$. 
\end{thm}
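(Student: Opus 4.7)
The plan is to combine Theorem \ref{almost}, which provides a structural decomposition of any $\delta$-almost homomorphism $\pi \colon PSL_m(k) \to U(N)$ into a ``trivial'' part and a ``regular'' part governed by a projection $p \in N$, with Proposition \ref{tqhaagerup}, which exploits property $\mathrm{T_Q}$ together with the Haagerup property of $N$ to produce group elements on which $\pi$ is nearly trivial in the $2$-norm. The hypothesis on $k$ is used precisely to guarantee the existence of a property-$\mathrm{T_Q}$ subgroup $\Lambda \subset PSL_m(k)$: if $\mathrm{char}\,k = 0$ I take $\Lambda := PSL_m(\Z)$, and if $\mathrm{char}\,k = p$ I pick some $t \in k$ transcendental over $\F_p$ and set $\Lambda := PSL_m(\F_p[t,t^{-1}])$. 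In either case the natural map $\Lambda \hookrightarrow PSL_m(k)$ is injective (the relevant centers match up on $\Lambda$) and $\Lambda$ has property $\mathrm{T_Q}$ for $m \geq 3$ by the theorem of Ozawa cited after Definition \ref{tq}.

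Given $\varepsilon > 0$, I would fix an auxiliary $\varepsilon' > 0$ of order $\varepsilon^2$ and take $\delta$ to be the minimum of the constants supplied by Theorem \ref{almost} and by Proposition \ref{tqhaagerup} (applied to $\Lambda$), both at tolerance $\varepsilon'$. For such a $\delta$-almost homomorphism $\pi \colon PSL_m(k) \to U(N)$ with $N$ a finite factor with the Haagerup property, Theorem \ref{almost} produces a projection $p \in N$ with $\|\pi(g)p^\perp - p^\perp\|_2 \leq \varepsilon'$ and $|\tau(\pi(g)p)| \leq \varepsilon'$ for all $g \neq e$ (the size assumption on $k$ is automatic since $k$ is infinite), while Proposition \ref{tqhaagerup} applied to $\pi|_\Lambda$ yields a finite $F \subset \Lambda$ with $\Lambda = F \cdot \{g \in \Lambda : \|\pi(g) - 1\|_2 < \varepsilon'\}$.

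The heart of the argument is to use a single well-chosen element to bound $\tau(p)$. Since $\Lambda$ is infinite and $F$ is finite, there exists $g_0 \in \Lambda \setminus \{e\}$ with $\|\pi(g_0) - 1\|_2 < \varepsilon'$. Using the orthogonal decomposition $L^2(N) = L^2(N)p \oplus L^2(N)p^\perp$ into right ideals, one has
\[
\|\pi(g_0) - 1\|_2^2 = \|\pi(g_0)p - p\|_2^2 + \|\pi(g_0)p^\perp - p^\perp\|_2^2,
\]
hence $\|\pi(g_0)p - p\|_2 < \varepsilon'$. Expanding $\|\pi(g_0)p - p\|_2^2 = 2\tau(p) - 2\Re\tau(\pi(g_0)p) \geq 2\tau(p) - 2\varepsilon'$ forces $\tau(p) \leq 2\varepsilon'$. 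Feeding this bound back into the same orthogonal decomposition applied to an arbitrary $h \in PSL_m(k) \setminus \{e\}$ then yields $\|\pi(h) - 1\|_2^2 \leq 6\varepsilon' + (\varepsilon')^2$, which is less than $\varepsilon^2$ provided $\varepsilon'$ has been chosen small enough.

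The main conceptual obstacle, and the reason the Haagerup hypothesis on $N$ is essential, is that Theorem \ref{almost} alone gives no control on $\tau(p)$: an honest $\mathrm{II}_1$-factor embedding of $L(PSL_m(k))$ into $U(N)$ satisfies its conclusions with $p = 1$, in which case $\|\pi(g) - 1\|_2 = \sqrt{2}$ for every nontrivial $g$. Producing even one nontrivial group element on which $\pi$ is $2$-norm close to the identity is what rules this scenario out, and that is exactly what Proposition \ref{tqhaagerup} delivers, combining property $\mathrm{T_Q}$ of $\Lambda$ with the Haagerup property of the target factor.
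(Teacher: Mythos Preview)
Your proof is correct and follows essentially the same route as the paper's: both arguments locate a property-$\mathrm{T_Q}$ subgroup $\Lambda$ (namely $PSL_m(\Z)$ or $PSL_m(\F_p[t,t^{-1}])$), invoke Proposition~\ref{tqhaagerup} on $\pi|_\Lambda$ to produce a nontrivial $g_0$ with $\|\pi(g_0)-1\|_2$ small, and then combine this with the projection $p$ from Theorem~\ref{almost} to force $\tau(p)$ small and hence $\|\pi(g)-1\|_2$ small for all $g$. The only cosmetic difference is that you bound $\tau(p)$ by expanding $\|\pi(g_0)p-p\|_2^2$ via the orthogonal splitting $L^2(N)=L^2(N)p\oplus L^2(N)p^\perp$, whereas the paper bounds $\tau(p^\perp)$ from below via $|\tau(\pi(g_0)p^\perp)|=|\tau(\pi(g_0))-\tau(\pi(g_0)p)|$; the two computations are equivalent.
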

\begin{proof} If the characteristic of $k$ is equal to $p$ and $k$ is not an algebraic extension of a finite field, then $\mathbb F_p[t,t^{-1}] \subset k$. Hence, we get $PSL_m(\mathbb F_p[t,t^{-1}]) \subset PSL_m(k)$. If the characteristic is equal to zero, then $PSL_m(\Z) \subset PSL_m(k)$. In either case there exists some infinite subgroup $\Gamma \subset PSL_m(k)$ which has Ozawa's Property ${\rm T_Q}$. 
Let $\varepsilon > 0$ be given. By Proposition \ref{tqhaagerup} there exists $\delta > 0$ such that if $\pi \colon PSL_m(k) \to U(N)$ is a $\delta$-almost homomorphism into a finite factor $N$ with the Haagerup property then there exists $g_0 \in \Gamma \setminus \{ e \}$ such that $\| \pi(g_0) - 1 \|_2 < \varepsilon$.

By Theorem~\ref{almost} there exists a projection $p \in N$ such that $\| \pi(g) p^\perp - p^\perp \|_2 \leq \varepsilon$, and $| \tau( \pi(g) p ) | \leq \varepsilon$ for all $g \in PSL_m(k) \setminus \{ e \}$. Since $| \tau(\pi(g_0)) | \geq 1 - 2\varepsilon$ we must have $$\tau(p^\perp) \geq |\tau(\pi(g_0)p^{\perp})| = |\tau(\pi(g_0)) - \tau(\pi(g_0)p)|  \geq 1 - 3\varepsilon$$ or equivalently $\tau(p) \leq 3 \varepsilon$. Hence, we conclude 
$$\| \pi(g) - 1 \|_2 \leq \| \pi(g)p - p \|_2 + \| \pi(g)p^{\perp}  - p^{\perp} \|_2 \leq \|\pi(g)p - p\| \cdot \tau(p)^{1/2} + \varepsilon \leq 4 \varepsilon^{1/2} + \varepsilon$$ for all $g \in PSL_m(k)$. This finishes the proof.
\end{proof}

\begin{rem}
Note that the strength of Theorem \ref{almost} is not needed in the proof of the previous theorem. Indeed, there is a universal constant $n \in \N$ (only depending on $m$) such that every element in $PSL_m(k)$ is a product of at most $n$ conjugates of any non-trivial element in $PSL_m(k)$. This shows that any $\delta$-homomorphism which is $\varepsilon$-trivial on some $g_0 \in PSL_m(k)$ is uniformly $n\varepsilon$-trivial.
\end{rem}

In order to extend the previous theorem to almost homomorphisms defined on $SL_n(BS^{-1})$ for $n \geq 3$, we need to recall some notation from \cite{witte}. Let $\mathfrak q \subset BS^{-1}$ be a non-trivial ideal. For $1 \leq i,j \leq n$ and $i \neq j$, we set $e_{ij}(a)$ to be the elementary matrix with $a$ at the $(i,j)$-coordinate and $1$ on the diagonal. We define 
$${\rm LU}(n,\mathfrak q) := \{e_{ij}(a) \mid 1 \leq i,j \leq n, i \neq j, a \in \mathfrak{q} \}.$$ We set ${\rm E}(n,\mathfrak q) := \langle {\rm LU}(n,\mathfrak q) \rangle$ denote by ${\rm LU}^{\lhd}(n,BS^{-1}; \mathfrak q)$ the set of ${\rm E}(n,BS^{-1})$-conjugates of ${\rm LU}(n,\mathfrak q)$. Now, Corollary 3.13(2) of \cite{witte} says that there exists a universal constant $r \in \N$ (only depending on the degree of $K$ over $\mathbb Q$ and on $n$), such that any element in
${\rm E}^{\lhd}(n,BS^{-1};\mathfrak q)$ is a product of at most $r$ factors from ${\rm LU}^{\lhd}(n,BS^{-1}; \mathfrak q)$.
By Theorem 3.12 of \cite{witte}, the subgroup $
{\rm E}^{\lhd}(n,BS^{-1},\mathfrak q) \subset SL_n(BS^{-1})$ has finite index.

\begin{thm}
Let $BS^{-1}$ be a ring as explained in Section \ref{charactersonrings} and let $n \geq 3$. For all $\varepsilon > 0$ there exists $\delta > 0$ such that if $\pi \colon SL_n(BS^{-1}) \to U(N)$ is a $\delta$-almost homomorphism  
into a finite factor $N$ with the Haagerup property then there exists an ideal $\mathfrak q \subset BS^{-1}$, such that $\| \pi(g) - 1 \|_2 < \varepsilon$ for all $g \in {\rm E}^{\lhd}(n,BS^{-1};\mathfrak q)$. In particular, $\pi$ is $\varepsilon$-trivial on a subgroup of finite index.
\end{thm}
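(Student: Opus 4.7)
The plan is to mirror the proof of Theorem \ref{almostk}, substituting the bounded-generation statement of \cite[Corollary 3.13(2)]{witte} for the simplicity of $PSL_m(k)$ (used via the remark following Theorem \ref{almostk}), together with an additional commutator step to extract an almost-trivial elementary matrix from a generic non-central element.

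For the setup: since $n \geq 3$, the group $SL_n(BS^{-1})$ is an irreducible lattice in a product of higher-rank real and $p$-adic algebraic groups, hence has Ozawa's Property ${\rm T_Q}$ by \cite[Theorem B]{ozawa}; alternatively, it contains $SL_n(\Z)$, whose Property ${\rm T_Q}$ already suffices. Let $r$ be the universal constant of \cite[Corollary 3.13(2)]{witte} such that every element of ${\rm E}^{\lhd}(n,BS^{-1};\mathfrak q)$ is a product of at most $r$ elements of ${\rm LU}^{\lhd}(n,BS^{-1};\mathfrak q)$. Fix $\varepsilon > 0$, choose $\eta > 0$ small compared to $\varepsilon/r$, and by Proposition \ref{tqhaagerup} take $\delta > 0$ such that every $\delta$-almost homomorphism $\pi \colon SL_n(BS^{-1}) \to U(N)$ into a Haagerup ${\rm II}_1$-factor $N$ admits a finite set $F \subset SL_n(BS^{-1})$ with $F \cdot H = SL_n(BS^{-1})$, where $H := \{g : \|\pi(g) - 1\|_2 < \eta\}$. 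Since $F$ and the center $C$ are finite while $SL_n(BS^{-1})$ is infinite, there is a non-central $g_0 \in H \setminus \{e\}$.

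The key new step extracts an almost-trivial elementary matrix from $g_0$. Writing $g_0 = I + A$ with $A$ non-scalar, a standard bounded-length sequence of commutators of $g_0$ with matrices $e_{k\ell}(1)$ and further commutators among themselves isolates a genuine elementary matrix $e_{ij}(a_0)$ for some $i \neq j$ and some nonzero $a_0 \in BS^{-1}$ expressible polynomially in the entries of $A$. Using $\|[u,v] - 1\|_2 \leq 2\|u-1\|_2$ for unitaries, the $\delta$-almost-homomorphism property, and the approximate conjugation invariance $\|\pi(hgh^{-1}) - 1\|_2 \leq \|\pi(g) - 1\|_2 + 4\delta$, this computation gives $\|\pi(e_{ij}(a_0)) - 1\|_2 \leq C(n)(\eta + \delta)$ for a constant $C(n)$. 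Setting $\mathfrak q := \langle a_0 \rangle$ and using the commutator identity $[e_{ij}(a_0),e_{j\alpha}(1)] = e_{i\alpha}(a_0)$ followed by $[e_{i\alpha}(a_0),e_{\alpha j}(b)] = e_{ij}(a_0 b)$ (both valid for distinct indices since $n \geq 3$), the estimate propagates to every $e_{k\ell}(a)$ with $a \in \mathfrak q$, $k \neq \ell$. Approximate conjugation invariance (up to $4\delta$) extends it to all of ${\rm LU}^{\lhd}(n,BS^{-1};\mathfrak q)$.

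Combining with \cite[Corollary 3.13(2)]{witte} and the triangle inequality $\|\pi(g_1 \cdots g_r) - 1\|_2 \leq \sum_i \|\pi(g_i) - 1\|_2 + (r-1)\delta$ yields $\|\pi(g) - 1\|_2 < \varepsilon$ for all $g \in {\rm E}^{\lhd}(n,BS^{-1};\mathfrak q)$, provided $\eta$ and $\delta$ were chosen small enough. The ``in particular'' statement follows from \cite[Theorem 3.12]{witte}, which says ${\rm E}^{\lhd}(n,BS^{-1};\mathfrak q) \subset SL_n(BS^{-1})$ has finite index. The main obstacle is the commutator extraction in the preceding paragraph: a careful linear-algebraic argument is needed to show that, regardless of which non-central $g_0 \in H$ is produced by Proposition \ref{tqhaagerup}, a word of length uniformly bounded in $n$ alone isolates a genuine elementary matrix $e_{ij}(a_0)$ with $a_0$ in a nonzero ideal of $BS^{-1}$.
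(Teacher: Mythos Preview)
Your overall architecture matches the paper's: invoke Property ${\rm T_Q}$ via $SL_n(\Z)$ and Proposition~\ref{tqhaagerup}, isolate an almost-trivial elementary matrix, propagate to ${\rm LU}^{\lhd}(n,BS^{-1};\mathfrak q)$ by conjugation, and finish with the bounded-generation constant $r$ from \cite{witte}. The divergence is exactly at the step you yourself flag as the main obstacle, and there you leave a genuine gap: the commutator extraction of some $e_{ij}(a_0)$ from an arbitrary non-central $g_0$ is asserted but not carried out. Doing this with a word length bounded in $n$ alone requires a case analysis (diagonal versus non-diagonal non-scalar $g_0$, control of which entries survive iterated commutators) that you have not supplied.

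The paper sidesteps this entirely with a pigeonhole argument. Rather than taking an arbitrary non-central $g_0 \in S(\eta)$ and trying to reduce it, the paper looks from the outset at the infinite abelian subgroup $\Z^2 = \{e_{13}(a)e_{23}(b) : a,b \in \Z\}$ sitting inside $SL_2(BS^{-1}) \ltimes (BS^{-1})^2 \subset SL_3(BS^{-1}) \subset SL_n(BS^{-1})$. Since $\Z^2 \subset SL_n(\Z) \subset F \cdot S(\varepsilon_0)$ with $F$ finite, some translate $f_0 S(\varepsilon_0)$ meets $\Z^2$ in an infinite set; the difference of two such elements is a nontrivial $\left(\begin{smallmatrix} a \\ b \end{smallmatrix}\right) \in \Z^2 \cap S(2\varepsilon_0 + \delta)$. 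Conjugating by $e_{12}(c)$ gives $\left(\begin{smallmatrix} a+cb \\ b \end{smallmatrix}\right) \in S(2\varepsilon_0 + 3\delta)$ for every $c \in BS^{-1}$, and one further difference yields $e_{13}(cb) \in S(4\varepsilon_0 + 5\delta)$, producing $\mathfrak q = \langle b \rangle$ directly. By aiming at $\Z^2$ from the start the paper lands in commuting elementary matrices immediately, with no case analysis on the shape of $g_0$; this is what your approach is missing.
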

\begin{proof} 
Again, we assume that $\pi(g^{-1})=\pi(g)^*$ without loss of generality. We set 
$$S(\varepsilon) := \{g \in SL_n(BS^{-1}) \mid \|1 -\pi(g)\|_2 < \varepsilon \}.$$
It is clear that $S(\varepsilon)^{-1} = S(\varepsilon)$ and $S(\varepsilon)S(\varepsilon') \subset S(\varepsilon + \varepsilon'+ \delta)$. Moreover, $g S(\varepsilon) g^{-1} \subset S(\varepsilon + 2 \delta)$ for all $g \in SL_n(BS^{-1})$. 

Let $r$ be the universal constant (only depending on the number field and on $n \in \N)$ as discussed before the statement of the theorem.
Let $\varepsilon_0 >0$ to be determined later and let us first study the restriction of $\pi$ to $SL_n(\Z) \subset SL_n(BS^{-1})$. For $\delta:=\delta(\varepsilon_0)/(14r)$ as in Proposition \ref{tqhaagerup}, we get that there exists a finite set $F \subset SL_n(\Z)$ such that
$SL_n(\Z) \subset F \cdot S(\varepsilon_0).$ Let us also consider the standard copy of $SL_2(BS^{-1}) \ltimes (BS^{-1})^2 \subset SL_3(BS^{-1})$ embedded in the first two rows. Since $\Z^2 \subset (BS^{-1})^2$ is infinite and $F$ is finite, there exists $f_0 \in F$ such that the set $\Z^2 \cap f_0 S(\varepsilon_0)$ is infinite. Hence, there exists some non-trivial element $g = \left( \begin{smallmatrix} a \\b \end{smallmatrix} \right)\in \Z^2$ with $g \in (f_0S(\varepsilon_0))^{-1} f_0S(\varepsilon_0) \subset S(2 \varepsilon_0 + \delta)$. Without loss of generality, we may assume that $b \neq 0$. The conjugation action of the group $SL_2(BS^{-1})$ on $(BS^{-1})^2$ is just the standard action and hence $\left( \begin{smallmatrix} a + cb \\ b \end{smallmatrix} \right) \in S(2 \varepsilon_0 + 3\delta)$ for all $c \in BS^{-1}$. Taking the difference with $\left(\begin{smallmatrix} a \\ b \end{smallmatrix} \right) \in S(2 \varepsilon_0 + \delta)$, we conclude that
$\left(\begin{smallmatrix} c \\ 0 \end{smallmatrix} \right) \in S(4 \varepsilon_0 + 5 \delta)$ for all $c \in BS^{-1}$ which lie in the ideal $\mathfrak q$ generated by $b \in BS^{-1}$. Hence, ${\rm LU}^{\lhd}(n,BS^{-1},q) \subset S(4 \varepsilon_0 + 7 \delta)$ and thus
$${\rm E}^{\lhd}(n,BS^{-1},\mathfrak q) \subset S(r(4 \varepsilon_0 + 8 \delta)).$$ We now choose $\varepsilon_0 = \varepsilon/(8r)$ and see that ${\rm E}^{\lhd}(n,BS^{-1},\mathfrak q) \subset S(\varepsilon)$ by our choice of $\delta>0$.
Since $
{\rm E}^{\lhd}(n,BS^{-1},\mathfrak q) \subset SL_n(BS^{-1})$ has finite index, this finishes the proof.
\end{proof}

\section{Other applications}

Here, we want to mention two applications of our results, one to almost actions on probability spaces and one to sums-of-squares decompositions in the complex group ring.

\subsection{Almost actions on probability spaces}

We denote by $[\mathcal R]$ the full group of the amenable {\rm II}$_1$-measured equivalence relation $\mathcal R$, see \cite{dyefullgroup} for details. We only state the result for $PSL_m(k)$ and leave the generalizations to $SL_m(BS^{-1})$ to the reader.

\begin{cor}
Let $k$ be a field which is not an algebraic extension of a finite field, and $m \geq 3$. For all $\varepsilon > 0$ there exists $\delta > 0$ such that if $\pi \colon PSL_m(k) \to [\mathcal R]$ is a $\delta$-almost homomorphism,  i.e.\
$$\mu\left(\{x \in X \mid \pi(gh)x \neq \pi(g)\pi(h)x \} \right) < \delta, \quad \forall  g,h \in PSL_m(k),$$ 
then $\mu\left( \{x \in X \mid \pi(g)x \neq x \}\right) < \varepsilon$ for all $g \in PSL_m(k)$. 
\end{cor}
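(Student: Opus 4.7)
The plan is to reduce this to Theorem~\ref{almostk} by passing from the full group $[\mathcal R]$ of the amenable II$_1$-equivalence relation to the unitary group of the associated von Neumann algebra $L\mathcal R$.

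First, recall that since $\mathcal R$ is the amenable II$_1$-equivalence relation, the von Neumann algebra $L\mathcal R$ is the hyperfinite II$_1$-factor $R$, and is in particular amenable; hence it satisfies the Haagerup property of Definition~\ref{h} (one may take conditional expectations onto an increasing sequence of finite-dimensional subalgebras). The canonical embedding $[\mathcal R] \hookrightarrow U(L\mathcal R)$, $\phi \mapsto u_\phi$, is a group homomorphism satisfying $\tau(u_\phi) = \mu(\{x : \phi(x) = x\})$, and therefore
$$\|u_\phi - u_\psi\|_2^2 = 2\mu(\{x \in X : \phi(x) \neq \psi(x)\}), \quad \forall \phi, \psi \in [\mathcal R].$$
Thus measure-theoretic discrepancies between maps into $[\mathcal R]$ translate into $2$-norm discrepancies between the associated unitaries.

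Using this dictionary, if $\pi \colon PSL_m(k) \to [\mathcal R]$ is a $\delta$-almost homomorphism in the sense of the statement, then composing with the embedding yields
$$\|u_{\pi(gh)} - u_{\pi(g)} u_{\pi(h)}\|_2^2 = \|u_{\pi(gh)} - u_{\pi(g)\pi(h)}\|_2^2 = 2\mu(\{x : \pi(gh)x \neq \pi(g)\pi(h)x\}) < 2\delta,$$
so $g \mapsto u_{\pi(g)}$ is a $\sqrt{2\delta}$-almost homomorphism into $U(L\mathcal R) = U(R)$, which has the Haagerup property.

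Given $\varepsilon > 0$, I would therefore apply Theorem~\ref{almostk} with threshold $\varepsilon' := \sqrt{2\varepsilon}$ to obtain some $\delta' > 0$, and set $\delta := (\delta')^2 / 2$. Theorem~\ref{almostk} then forces $\|u_{\pi(g)} - 1\|_2 < \sqrt{2\varepsilon}$ for all $g \in PSL_m(k)$, and translating back via the dictionary gives $\mu(\{x : \pi(g)x \neq x\}) < \varepsilon$, as desired. The argument is essentially formal given Theorem~\ref{almostk}; the only minor point to check is that the amenable II$_1$-equivalence relation does produce a finite factor with the Haagerup property, which is immediate from amenability of $R$.
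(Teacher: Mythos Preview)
Your argument is correct and is precisely the intended one: the paper's own proof is a single sentence (``This is an immediate corollary of Theorem~\ref{almostk}''), and you have simply made explicit the passage from $[\mathcal R]$ to $U(L\mathcal R)$ via the standard formula $\|u_\phi - u_\psi\|_2^2 = 2\mu(\{x : \phi(x)\neq\psi(x)\})$, together with the observation that $L\mathcal R \cong R$ has the Haagerup property.
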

\begin{proof}
This is an immediate corollary of Theorem \ref{almostk}.
\end{proof}

\subsection{Sums-of-squares decompositions} We want to mention some applications to a particular algebraic problem that arises in the study of group rings. It is clear that any hermitean element $a \in \C[\Gamma]$ which is a sum of hermitean squares and hermitean commutators in the complex group ring, i.e.\ which is of the form
$$a = \sum_{i=1}^n b_i^*b_i +  \sum_{j=1}^m [c^*_j,c_j]$$
for some $b_1,\dots,b_n,c_1,\dots,c_m \in \C[\Gamma]$, is sent to a non-negative value by the linear extension of every character to $\C[\Gamma]$. The cone $C \subset \C[\Gamma]_h$ formed by all sums of hermitean squares and hermitean commutators has an algebraic interiour point, see \cite{cim} for definitions. Hence, by a standard separation argument the following converse holds. If the linear extension of any character takes a non-negative value on some hermitean element $a \in \C[\Gamma]_h$, then $a + \varepsilon \in C$ for all $\varepsilon>0$. In the presence of character rigidity as for example in the case $\Gamma := PSL_m(k)$ for $m \geq 2$ and some infinite field, we obtain the following simple consequence. 

\begin{thm} \label{sos}
Let $\Gamma := PSL_m(k)$ for some infinite field and $m \geq 2$ and let $a = \sum_{g \in \Gamma} a_g g \in \C[\Gamma]$ be some hermitean element with $\sum_{g \in \Gamma} a_g >0$ and $a_e >0$. Then, the element $a \in \C[\Gamma]$ is a sum of hermitean squares and hermitean commutators.
\end{thm}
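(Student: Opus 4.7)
The plan is to derive Theorem~\ref{sos} as a direct corollary of the character rigidity results already established, combined with the separation argument sketched in the preamble. The key input is the classification of extremal characters on $\Gamma := PSL_m(k)$; once this is available, the conclusion reduces to a short computation and an application of separation.

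First I would classify the extremal characters of $\Gamma$. For $m=2$ this uses Theorem~\ref{rigid}, and for $m \geq 3$ Kirillov's result \cite{kir}: every extremal character on $SL_m(k)$ is either the constant character or is induced from the center $C$ (the finite-quotient case does not occur because $SL_m(k)$ has no non-trivial finite quotients when $k$ is infinite). A character on $PSL_m(k)$ lifts to a character on $SL_m(k)$ which is identically $1$ on $C$. Decomposing such a lift into extremals on $SL_m(k)$ via the Choquet simplex structure, all summands induced from $C$ are supported on $C$, while the constant summand contributes the value $t$ everywhere. Evaluating off the identity coset shows that any character on $PSL_m(k)$ has the form $\varphi(g) = t$ for $g \neq e$ and $\varphi(e) = 1$, with $t \in [0,1]$. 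Hence the extremal characters of $PSL_m(k)$ are exactly the constant character $\mathbf{1}$ (at $t=1$) and the regular character $\delta_e$ (at $t=0$).

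Second, I would complete the proof using the separation argument. For any character $\varphi = t \cdot \mathbf{1} + (1-t)\delta_e$ on $\Gamma$,
\[
\varphi(a) \;=\; t \sum_{g \in \Gamma} a_g + (1-t)\, a_e \;\geq\; \eta := \min\Bigl(\sum_{g \in \Gamma} a_g,\; a_e\Bigr) \;>\; 0
\]
by hypothesis. Setting $a' := a - \eta \cdot e \in \C[\Gamma]_h$, we obtain $\varphi(a') \geq 0$ for every character $\varphi$ on $\Gamma$. The separation argument quoted in the preamble (using the algebraic interior point of the cone $C$ from \cite{cim}) then yields $a' + \varepsilon \in C$ for every $\varepsilon > 0$; taking $\varepsilon = \eta > 0$ gives $a = a' + \eta \in C$, which is precisely the asserted sum-of-squares-and-commutators decomposition.

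I do not anticipate a serious obstacle: the heavy lifting has already been done by Theorem~\ref{rigid} and Kirillov's work, and the separation tool is taken as a black box from \cite{cim}. The only slightly delicate point is the transfer of the character classification from $SL_m(k)$ down to $PSL_m(k)$, but this is a brief bookkeeping argument using that $PSL_m(k)$-characters correspond to $SL_m(k)$-characters trivial on the finite central subgroup $C$.
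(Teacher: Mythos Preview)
Your proposal is correct and matches the paper's intended argument. The paper does not write out a proof of Theorem~\ref{sos}; it presents the theorem as a ``simple consequence'' of the separation argument in the preamble together with character rigidity, and your write-up is exactly the natural way to fill this in---including the observation that subtracting $\eta \cdot e$ converts the strict lower bound $\varphi(a) \geq \eta$ into non-negativity of $a' = a - \eta$, so that the separation conclusion $a' + \varepsilon \in C$ can be invoked at $\varepsilon = \eta$ to land on $a$ itself.
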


We are grateful to Tim Netzer for bringing the possibility of this application of character rigidity to our attention.

\section*{Acknowledgments} We want to thank Mikl\'os Ab\'ert for motivating discussions and sharing a preliminary version of \cite{abert} with us, and Tim Netzer for discussions about Theorem \ref{sos}. A.T.\ wants to thank Uri Bader for interesting discussions about a first version of this paper.
This work was started when J.P.\ visited Universit\"at Leipzig in September 2012, he is grateful for their hospitality. J.P.\ is supported by NSF Grant DMS-1201565, and a grant from the Alfred P. Sloan Foundation. A.T.\ is supported by ERC Grant 277728. 

We thank the unknown referee for careful proof-reading and a many helpful comments.


\begin{thebibliography}{xx}

\bibitem{abert}
M. Ab\'ert, N. Avni, and J. Wilson. A strong simplicity property for projective special linear groups, personal communication.
 
\bibitem{abertIRS}
M. Ab\'ert, Y. Glasner, and B. Virag. Kesten's theorem for invariant random subgroups. ArXiv:1201.3399, Preprint 2012.

\bibitem{bekka}
B. Bekka. Operator-algebraic supperrigidity for $SL_n(\mathbb Z)$, $n \geq 3$. {\it Invent. Math.} {\bf 169} (2007), no. 2, 401-425.

\bibitem{choda}
M. Choda. Group factors of the Haagerup type. {\it Proc. Japan Acad.} {\bf 59} Ser. A (1983), 174-177.

\bibitem{cim}
J. Cimpric, T. Netzer, and M. Marschall.
Closures of Quadratic Modules. 
{\it Israel Journal of Mathematics} {\bf 183} (2011), no. 1, 445-474.

\bibitem{thom3}
M. Burger, N. Ozawa and A. Thom. On Ulam stability. 
ArXiv:1010.0565, Preprint 2010, to appear in Israel Journal of Mathematics.

\bibitem{connescorrespondences}
A. Connes. Correspondences, handwritten notes, 1980.

\bibitem{connesjones}
A. Connes and V. Jones. Property T for von Neumann algebras. {\it Bull. London Math. Soc.} {\bf 17} (1985), 57-62.

\bibitem{creutzpeterson}
D. Creutz and J. Peterson. Stabilizers of ergodic actions of lattices and commensurators. Preprint, 2012.

\bibitem{dixmier}
J. Dixmier. Von Neumann Algebras, 
North-Holland Mathematical Library 27, Amsterdam (1981),
xxxviii+437.

\bibitem{dudkomedynets}
A. Dudko and K. Medynets. Finite factor representations of Higman-Thompson groups. ArXiv:1212.1230, Preprint 2012, to appear in Groups, Geometry, and Dynamics.

\bibitem{dyefullgroup}
H.A. Dye. On groups of measure preserving transformations. I, {\it Amer. J. Math.} {\bf 81}, no. 1 (1959), 119-159.

\bibitem{dye}
H.A. Dye. On the ergodic mixing theorem, {\it Trans. Amer. Math. Soc.} {\bf 118} (1965), 123-130.

\bibitem{feldmoore}
J. Feldman and C.C. Moore. Ergodic equivalence relations, cohomology, and von Neumann algebras. II, {\it Trans. Amer. Math. Soc.} {\bf 234}, (1977), no. 2, 325-259.

\bibitem{gamm}
C. Gamm. $\varepsilon$-representations of groups and Ulam stability. Master thesis, Georg-August-Universit\"at G\"ottingen.

\bibitem{gluck}
D. Gluck. Sharper character value estimates for groups of Lie type, {\it J. Algebra} {\bf 174} (1995), 229-266.

\bibitem{grunew}
F. Grunewald and J. Schwermer. Free nonabelian quotients of $SL_2$ over orders of imaginary quadratic numberfields. {\it J. Algebra} {\bf 69} (1981) 298--304.

\bibitem{haagerup}
U. Haagerup. An example of a non-nuclear $C^*$-algebra, which has the metric approximation property. {\it Invent. Math.} {\bf 50} (1978/79) 279--293.

\bibitem{hc}
Harish-Chandra. On the characters of a semisimple Lie group. {\it Bulletin of the American Mathematical Society} {\bf 61} (5)  (1955) 389--396

%\bibitem{isaacs}
%I.M. Isaacs.
%Character theory of finite groups.
%AMS Chelsea Publishing, Providence, RI, 2006.



%\bibitem{joli} 
%P. Jolissaint. The Haagerup property for measure-preserving standard equivalence relations. {\it Ergodic Theory Dynam. Systems} {\bf 25} (2005), no. 1, 161--174.

\bibitem{kadison}
R. V. Kadison. A generalized Schwarz inequality and algebraic invariants for operator algebras. {\it Ann. of Math.} (2) {\bf 56} (1952), 494-503.

\bibitem{kir}
A.A. Kirillov. Positive definite functions on a group of matrices with elements from a discrete field. {\it Soviet. Math. Dokl.} {\bf 6} (1965), 707--709.

%\bibitem{kleiner}
%B. Kleiner. A new proof of Gromov's theorem on groups of polynomial growth. {\it J. Amer. Math. Soc.} {\bf 23}, (3) (2010) 815-829.

\bibitem{margulis} 
G.A. Margulis. Non-uniform lattices in semisimple algebraic groups, in Lie groups and their representations, ed. I. M. Gelfand, Wiley, New York, 1975.

\bibitem{witte}
D.W. Morris. Bounded generation of $SL(n, A)$ (after D. Carter, G. Keller, and E. Paige).
{\it New York J. Math.} {\bf 13} (2007), 383-421.


\bibitem{mostow}
G.D. Mostow. Strongrigidity of locally symmetric spaces. {\it Annals of Math. Studies}, No. 78, Princeton Univ. Press, Princeton NJ, 1973.

\bibitem{wigneu}
J. von Neumann and E. Wigner.
Minimally almost periodic groups.
{\it Ann. of Math.} (2) {\bf 41} (1940), 746-750. 

\bibitem{nevozimmer}
A. Nevo and R. Zimmer. Homogenous projection factors for the action of semi-simple Lie groups. {\it Invent. Math.} {\bf 138} (1999), 229-252.

\bibitem{ovc}
S.V. Ovcinnikov. Positive definite functions on Chevalley groups. {\it Funct. Anal. Appl.} {\bf 5} (1971), 79-80.

\bibitem{ozawa}
N. Ozawa. Quasi-homomorphism rigidity with non-commutative targets. {\it J. Reine Angew. Math.} {\bf 655} (2011), 89-104.

\bibitem{popa83}
S. Popa.
Orthogonal pairs of $\ast $-subalgebras in finite von Neumann algebras.
{\it J. Operator Theory} {\bf 9} (1983), no. 2, 253-268. 

\bibitem{popacorrespondences}
S. Popa. Correspondences. INCREST Preprint, 56/1986.

\bibitem{popa}
S. Popa. On a class of type {\rm II}$_1$ factors with Betti numbers invariants. {\it Ann. of Math.} {\bf 163} (2006), 809-899.

\bibitem{prasad}
G. Prasad. Strong rigidity of $Q$-rank 1 lattices. {\it Invent. Math.} {\bf 21} (1973), 255-286.

\bibitem{robertson}
A.G. Robertson. Property (T) for {\rm II}$_1$ factors and unitary representations of Kazhdan groups. {\it Math. Ann.} {\bf 296} (1993), 547-555.

\bibitem{segneu}
I. Segal and J. von Neumann.
A theorem on unitary representations of semi-simple Lie groups.
{\it Ann. of Math.} (2) {\bf 52} (1950), 509-517. 

%\bibitem{shalom}
%Y. Shalom. Rigidity of commensurators and irreducible lattices. {\it Invent. Math.} {\bf 141} (1) (2000) 1-54.

\bibitem{skud}
H.-L. Skudlarek. Die unzerlegbaren Charaktere einiger diskreter Gruppen. {\it Math. Ann.} {\bf 223} (1976), 213-231.

\bibitem{stuckzimmer}
G. Stuck and R. Zimmer. Stabilizers for ergodic actions of higher rank semisimple groups. {\it Ann. of Math.} (2) {\bf 139}, no. 3 (1994), 723-747.

\bibitem{takesaki}
M. Takesaki. Theory of Operator Algebras. I. Encyclopaedia of Mathematical Sciences, {\bf 124}.
Operator Algebras and Non-commutative Geometry, {\bf 5}. Springer-Verlag, Berlin, 2002.

%\bibitem{taoshalom}
%T. Tao and Y. Shalom. A finitary version of Gromov's polynomial growth theorem. {\it Geom. Funct. Anal.} {\bf 20} (6) (2010) 1502-1547.

\bibitem{thoma1}
E. Thoma. \"Uber unit\"are Darstellungen abz\"ahlbarer, diskreter Gruppen. {\it Math. Ann.} {\bf 153} (1964), 111-138.

\bibitem{thoma2}
E. Thoma. Die unzerlegbaren, positiv-definiten Klassenfunktionen der abz\"ahlbar unendlichen, symmetrischen Gruppe. {\it Math. Z.} {\bf 85} (1964), 40-61.

\bibitem{thoma3}
E. Thoma. Invariante positiv definite Klassenfunktionen und ergodische Ma\ss e. {\it Math. Ann.} {\bf 162} (1965/1966), 172-189.

\bibitem{umegaki}
H. Umegaki. Conditional expectation in an operator algebra. {\it T\^ohoku Math. J.} (2) {\bf 6} (1954), 177-181.

\bibitem{vershiknonfree}
A.M. Vershik. Nonfree actions of countable groups and their characters. {\it J. Math. Sci.} {\bf 174} (1) (2011), 1-6.

\bibitem{vershik}
A.M. Vershik and S.V. Kerov. Characters and factor representations of the infinite symmetric group. {\it Sov. Math. Dokl.} {\bf 23} (1981), 389-392.

\bibitem{voi}
D. Voiculescu. Repr\'esentations factorielles de type II$_1$ de $U(\infty)$. {\it J. Math. Pures Appl.} IX, S\'er. {\bf 55} (1976), 1-20.


\end{thebibliography}
\end{document}